\newtheorem{theorem}{Theorem}[section]
\newtheorem{lemma}[theorem]{Lemma}
\newtheorem{proposition}[theorem]{Proposition}
\newtheorem{corollary}[theorem]{Corollary}
\theoremstyle{definition}
\newtheorem{definition}[theorem]{Definition}
\numberwithin{equation}{section}
\newtheorem{remark}[theorem]{Remark}
\newcommand{\Pb}{\mathbb{P}}
\newcommand{\ch}{{\rm ch}}
\newcommand{\Par}{\mathcal{P}}
\newcommand{\Sg}{\mathbb{S}}
\newcommand{\abf}{\mathbf{a}}
\newcommand{\Mmb}{\overline{M}}
\newcommand{\PGL}{{\rm PGL}}
\title{On the cohomology of moduli spaces of 
(weighted) stable rational curves} 
\author{Jonas Bergstr\"om}
\address{Matematiska institutionen, 
Stockholms Universitet, 
106 91 Stockholm, Sweden}
\email{jonasb@math.su.se}
\author{Satoshi Minabe}
\address{Department of Mathematics, 
Tokyo Denki University,
120-8551 Tokyo, Japan}
\email{minabe@mail.dendai.ac.jp}
\subjclass[2000]{Primary 14H10}
\begin{document}
\begin{abstract}
We give a recursive algorithm for computing the character of the cohomology of the moduli space $\Mmb_{0,n}$ of stable $n$-pointed genus zero curves as a representation of the symmetric group $\Sg_n$ on $n$ letters. Using the algorithm we can show a formula for the maximum length of this character. Our main tool is connected to the moduli spaces of weighted stable curves introduced by Hassett. 
\end{abstract}
\maketitle
\setlength{\baselineskip}{1.1\baselineskip}

\section{Introduction} 
Let $\Mmb_{0,n}$ be the moduli space of stable $n$-pointed curves of genus zero. It parametrizes the isomorphism classes of stable curves $(C, x_1,\ldots, x_n)$, where $C$ is a curve of genus zero and $(x_1, \ldots, x_n)$ are {\it distinct} smooth marked points on it. The symmetric group $\Sg_n$ on $n$ letters acts on $\Mmb_{0,n}$ by permuting the points $x_1, \ldots, x_n$. Hence we have a representation of $\Sg_n$ on the cohomology group $H^*(\Mmb_{0,n}, \mathbb{Q})$. The aim of this paper is to study this representation.

Our first goal is to give a recursive algorithm for computing the character of the $\Sg_n$-module $H^*(\Mmb_{0,n}, \mathbb{Q})$. Formulas for this character have been obtained earlier by Getzler--Kapranov~\cite{GetzlerKapranov} and Getzler~\cite{Getzler} by developing the theory of modular operads. Our method is based on the moduli spaces of weighted stable curves constructed by Hassett~\cite{Hassett}.

A weighted pointed curve consists of a curve and a set of marked points to which rational numbers (called {\it weights}) between zero and one are assigned. Marked points may coincide if the sum of their weights is not greater than one. Hassett constructed moduli spaces for the weighted pointed curves that are \emph{stable}. Since the weights determine the stability conditions for the curves, the moduli spaces may change as the weights are varied. It is shown in \cite{Hassett} that there are induced birational morphisms between these moduli spaces if the weights are varied in certain ways.

Now we explain the idea for computing the character of $H^*(\Mmb_{0,n}, \mathbb{Q})$ using the weighted stable curves. Let $(\mathbb{P}^1)^n/\!\!/ {\rm PGL}(2)$ be the Geometric Invariant Theory (GIT) quotient of the product of $n$ copies of the projective line by the diagonal action of ${\rm PGL}(2)$ with respect to the symmetric linearization $\boxtimes_{i=1}^n \mathcal{O}_{\mathbb{P}^1}(1)$. This space\footnote{More precisely, when $n$ is even, we have to consider the resolution of $(\mathbb{P}^1)^n/\!\!/ {\rm PGL}(2)$ constructed by Kirwan~\cite{Kirwan}, see Lemma \ref{lem:git} below.}, as well as $\Mmb_{0,n}$, can be interpreted as the moduli spaces of weighted stable curves for certain weights. These weights are related in such a way that there is an $\Sg_n$-equivariant birational morphism $\Mmb_{0,n} \to (\mathbb{P}^1)^n/\!\!/{\rm PGL}(2)$. This morphism factorizes into a sequence of blow-ups whose centers are transversal unions of smooth subvarieties.  Furthermore, each component of the centers is isomorphic to a moduli space of weighted stable rational curves with fewer than $n$ marked points. Based on this observation, we give a recursive formula for computing
the $\Sg_n$-character of $H^*(\Mmb_{0,n}, \mathbb{Q})$, see Theorem \ref{thm:ind} for further details.

The second goal of this paper is to show a formula for the length of the $\Sg_n$-module $H^*(\Mmb_{0,n}, \mathbb{Q})$. Recall that the irreducible $\Sg_n$-modules are indexed by the partitions of $n$. By definition, the length of an irreducible $\Sg_n$-module is the number of parts in the corresponding partition. For a finite-dimensional $\Sg_n$-module $V$, we denote the maximum of the lengths of the irreducible representations occurring in $V$ by $l(V)$. In \cite{Faber-Pandharipande}, Faber and Pandharipande showed that 
\begin{equation*}
l\bigl(H^{2i}(\Mmb_{0,n}, \mathbb{Q})\bigr) \leq {\rm min} (i+1,n-i-2). 
\end{equation*}
Using our algorithm we show that the above bound is indeed sharp. Actually, we can show a refined statement, see Theorem \ref{thm:length}.

The paper is organized as follows. In Section~\ref{sec:hassett} we summarize necessary facts on weighted stable curves. The recursive formula for the character of $H^*(\Mmb_{0,n}, \mathbb{Q})$ is presented in Section~\ref{sec:blowup}. It is made more explicit in Section~\ref{sec:algorithm} using symmetric functions, where we also give some examples. We then prove the formula for $l\bigl(H^{2i}(\Mmb_{0,n},\mathbb{Q})\bigr)$ in Section~\ref{sec:length}. 
In Appendix \ref{sec:cohomology} we give a formula for the cohomology of the blow-up
used to derive the formula for $H^*(\Mmb_{0,n}, \mathbb{Q})$ in Theorem \ref{thm:ind}. 
In Appendix~\ref{sec:character} we recall some facts about symmetric functions and characters of symmetric groups. In Appendix \ref{sec:order} we prove a formula for the length of some induced representation used in the proof of Theorem \ref{thm:length}. 

\subsection*{Acknowledgement} 
The second named author is supported in part by JSPS Grant-in-Aid for Young Scientists (No.\,22840041). The authors thank the Max--Planck--Institut f\"ur Mathematik for hospitality. The authors also thank the referees for their thorough reading and helpful comments.

\section{Preliminaries} \label{sec:hassett}
We recall the definition of weighted pointed stable curves and some properties of their moduli spaces. In the following, a curve is a compact and connected curve over $\mathbb{C}$ with at most nodal singularities and the genus of a curve is the arithmetic one.

\subsection{Weighted pointed stable rational curves} \label{sec:moduli}
\begin{definition}\label{def:definition1}
Let $n\geq 3$. An element $\abf=(a_1, \ldots, a_n)\in \mathbb{Q}^n$ 
satisfying   
$0<a_j\leq 1$ and $a_1+\cdots +a_n>2$
is called a {\it weight}.
\end{definition} 
\begin{definition}\label{def:definition2}
Given a weight $\abf=(a_1, \ldots, a_n)$, 
an $\abf$-weighted stable $n$-pointed rational curve 
$(C, x_1,\ldots, x_n)$ consists of a curve $C$ of genus $0$ 
and $n$ marked points $(x_1, \ldots, x_{n})$ 
on $C$ which satisfy the following conditions:
\begin{itemize}
\item[(i) ] all the marked points are non-singular points of $C$,
\item[(ii)] $x_{i_1}=\cdots=x_{i_k}$ may happen if
$\sum_{j=1}^{k}a_{i_j}\leq 1$,  where $\{i_1,\ldots, i_k\}\subset \{1, \ldots, n\}$,
\item[(iii)](\emph{stability}) the number of singularities plus the sum of the weights of the marked points on each irreducible component of $C$ should be strictly greater than $2$. 
\end{itemize} 
\end{definition}

Let $\Mmb_{0, \abf}$ be the moduli space of $\abf$-weighted stable $n$-pointed rational curves. It exists and it is an irreducible smooth projective variety of dimension $n-3$, see~\cite[Theorem~2.1]{Hassett}. For $\abf=(1,\ldots, 1)$ we have $\Mmb_{0, \abf}=\Mmb_{0, n}$. 

\subsection{Reduction morphisms}
Let $\abf=(a_1,\ldots, a_n)$ and 
${\bf b}=(b_1,\ldots,b_n)$ be two weights of the same length
with $a_i\geq b_i$ for all $i$. There exists a 
birational surjective map
$$\rho_{\bf b, a}: \Mmb_{0, \abf} \to \Mmb_{0, {\bf b}}$$
called a reduction morphism, see~\cite[Theorem 4.1]{Hassett}. 
For $(C,x_1\ldots, x_n)\in \Mmb_{0, \abf}$ we obtain 
$\rho_{\bf b, a} (C,x_1\ldots, x_n)\in \Mmb_{0, {\bf b}}$ 
by successively collapsing the components of $C$ which become 
unstable with respect to the weight ${\bf b}$. 
That is, the components of $C$ are collapsed 
for which condition (iii) of Definition~\ref{def:definition2} 
is no longer fulfilled when the weight $\abf$ is replaced by the 
weight ${\bf b}$. 

\begin{remark} 
Throughout this paper, the coefficients of all cohomology groups will be $\mathbb{Q}$. 
Note that the pullback
$\rho_{\bf b, a}^*: H^*(\Mmb_{0, {\bf b}}) \to H^*(\Mmb_{0, \abf})$
is injective 
(see e.g. \cite[Lemma 7.28]{Voisin})  
and that $\Mmb_{0, n}$ has no odd cohomology by a result 
of Keel \cite[p.~549]{Keel}. 
It follows that $\Mmb_{0, \abf}$ has no odd cohomology 
for any weight $\abf$, see also \cite[Theorem 1]{Ceyhan}.
\end{remark}

\section{The blow-up formula} \label{sec:blowup}
\subsection{The blow-up sequence}
For a weight 
$$\abf=(
\underbrace{1,\ldots, 1}_{k},
\underbrace{\dfrac{1}{l},\ldots, \dfrac{1}{l}}_{n-k}),\quad 0\leq k \leq n, 
$$
let $\Mmb^n_{k,l}:=\Mmb_{0, \abf}$ be the moduli space 
of $\abf$-weighted stable rational curves and note that $\Mmb^n_{0,1} \cong \Mmb_{0, n}$. The subgroup $\Sg_k \times \Sg_{n-k}$
of $\Sg_n$ preserves 
the weight $\abf$ and therefore acts on $\Mmb^n_{k,l}$. 
We have the following sequence
of reduction morphisms:
\begin{equation}\label{eq:blow-up}
\Mmb^n_{k,1} \to \Mmb^n_{k,2}\to \cdots \to \Mmb^n_{k, r(n,k)}, 
\end{equation}
where 
$$ 
r(n,k):=\begin{cases}
\lfloor \frac{n-1}{2} \rfloor~~ \mathrm{if}~~ k=0,\\ 
n-2~~ \mathrm{if}~~ k=1,\\
n-k  ~~\mathrm{if}~~ k\geq2.
\end{cases}
$$
The sequence \eqref{eq:blow-up} is equivariant under the action of 
$\Sg_k \times \Sg_{n-k}$. 
If $k\geq 2$ then clearly $\Mmb^n_{k, l} \to \Mmb^n_{k,l+1}$  
is an isomorphism for $l \geq r(n,k)$. 
Moreover, the first reduction morphism  
$\Mmb^n_{k, 1} \to \Mmb^n_{k, 2}$
is an isomorphism, because the fibers consist of products of the space $\Mmb_{0,3}$, which is isomorphic to a point. 

We describe the exceptional locus 
of an arrow $\rho^n_{k,l}: \Mmb^n_{k,l}\to \Mmb^n_{k,l+1}$ in the sequence \eqref{eq:blow-up}.
For a subset $I=\{i_1, \ldots, i_{l+1}\}$ of $\{k+1, \ldots, n\}$, let 
$\Mmb^n_{k,l+1}(I)$ be the closure in $\Mmb^n_{k,l+1}$ of the locus where
the marked points $x_{i_1}, \ldots, x_{i_{l+1}}$ collide.

\begin{lemma}\label{lem:center}
The reduction morphism 
$\rho^n_{k,l}: \Mmb^n_{k,l}\to \Mmb^n_{k,l+1}$
is the blow-up along the union $\cup_I \Mmb^n_{k,l+1}(I)$
where $I$ runs over all subsets of $\{k+1,\ldots, n\}$
of cardinality $l+1$.
\end{lemma}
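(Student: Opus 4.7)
The plan is to combine a moduli-theoretic analysis of the exceptional locus with Hassett's wall-crossing description of reduction morphisms.

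I would first compare the stability conditions for the two weights in play, $\mathbf{a}_l := (1^k, (1/l)^{n-k})$ and $\mathbf{a}_{l+1} := (1^k, (1/(l+1))^{n-k})$. By Definition~\ref{def:definition2}(ii), the only coincidences newly allowed in $\Mmb^n_{k,l+1}$ compared with $\Mmb^n_{k,l}$ are those of exactly $l+1$ light points: their total weight equals $1$ under $\mathbf{a}_{l+1}$ but exceeds $1$ under $\mathbf{a}_l$. Hence $\rho^n_{k,l}$ is an isomorphism outside $\bigcup_I \Mmb^n_{k,l+1}(I)$. Each center $\Mmb^n_{k,l+1}(I)$ is then identified, by collapsing the $l+1$ colliding weights $\tfrac{1}{l+1}$ to a single weight $1$, with the moduli space $\Mmb_{0,\mathbf{a}'}$ for $\mathbf{a}' = (1^{k+1}, (1/(l+1))^{n-k-l-1})$. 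Hassett's Theorem~2.1 gives smoothness and codimension $l$ in $\Mmb^n_{k,l+1}$.

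Next I would compute the generic fiber of $\rho^n_{k,l}$ over a point of $\Mmb^n_{k,l+1}(I)$: it consists of the curves obtained by attaching a $\Pb^1$-tail at the collision point carrying the $l+1$ points of $I$ (each of weight $1/l$) together with the attaching node (of weight $1$). This fiber moduli space $\Mmb_{0,(1,(1/l)^{l+1})}$ is isomorphic to $\Pb^{l-1}$: fixing the heavy point at $\infty$, it becomes the space of $l+1$ points on $\Ab^1$ modulo the affine group $\mathbb{G}_m \ltimes \mathbb{G}_a$, subject only to the condition that they do not all coincide, and a direct computation yields $\Pb^{l-1}$. The fiber dimension $l-1$ matches the codimension $l$, as required for a blow-up. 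I would then verify transversality of the union: if $I \neq I'$ and $I \cap I' \neq \emptyset$, then $|I \cup I'| \geq l+2$, so no stable point of $\Mmb^n_{k,l+1}$ has all of $I \cup I'$ colliding, whence the centers are disjoint; if $I \cap I' = \emptyset$ the two collisions occur at distinct points of the base curve, giving independent normal directions and a transverse intersection.

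Finally I would appeal to Hassett's wall-crossing description in \cite[Section~4]{Hassett}: the segment from $\mathbf{a}_{l+1}$ to $\mathbf{a}_l$ in weight space crosses exactly the walls $\sum_{i\in I} a_i = 1$ with $|I|=l+1$, and the associated reduction morphism is the blow-up along the corresponding loci. The main obstacle is upgrading the $\Pb^{l-1}$-fibration on the exceptional locus to the blow-up statement proper. This either follows from Hassett's argument as a black box, or requires a direct verification that $(\rho^n_{k,l})^{-1}\bigl(\bigcup_I \Mmb^n_{k,l+1}(I)\bigr)$ is an effective Cartier divisor whose restriction to each component is the projectivized normal bundle of the corresponding smooth center; smoothness and transversality of the centers then let one invoke the universal property of the blow-up to conclude.
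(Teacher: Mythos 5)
Your proposal is correct and ultimately rests on the same foundation as the paper: the paper's entire proof is a citation of \cite[Proposition 4.5 and Remark 4.6]{Hassett}, which is exactly where your argument also defers the final identification of the $\Pb^{l-1}$-fibration with the blow-up. Your preliminary analysis (the newly permitted collisions, the identification of the centers with $\Mmb^{n-l}_{k+1,l+1}$, the fiber $\Mmb_{0,(1,(1/l)^{l+1})}\cong\Pb^{l-1}$, and the disjointness/transversality of the centers) is an accurate unpacking of what that citation contains.
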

\begin{proof}
See \cite[Proposition 4.5 and Remark 4.6]{Hassett}. 
\end{proof}

\subsection{Intersections of components} 
Now we describe intersections between the spaces 
$\Mmb^n_{k,l+1}(I)$ for different choices of $I$. 
Let $I_1, \ldots, I_m$ be any subsets of 
$\{k+1,\ldots, n\}$ such that $\# I_j=l+1$, then 
$\cap_{j=1}^m \Mmb^n_{k,l+1}(I_j) \neq \emptyset$
if and only if $I_j \cap I_{j'}=\emptyset$ for all $j\neq j'$. 
When this condition is satisfied we have
$$
\cap_{j=1}^m \Mmb^n_{k,l+1}(I_j) \cong \Mmb^{n-lm}_{k+m, l+1}~,
$$
which follows from \cite[Proposition 4.5]{Hassett}. Let $G^{n}_{k, m, l+1}$ be the subgroup of $\Sg_k \times \Sg_{n-k}$ which preserves
the set $\{I_1, \ldots, I_m\}$. We have
$$
G^{n}_{k, m, l+1} \cong 
\Sg_k \times 
[(\Sg_{l+1})^m \rtimes \Sg_m] \times \Sg_{n-k-(l+1)m}~.
$$
The group $G^{n}_{k, m,  l+1}$ acts on the intersection $\cap_{j=1}^m \Mmb^n_{k,l+1}(I_j)$. 

\subsection{The main theorem}
As we have mentioned, $\Sg_{k}\times \Sg_{n-k}$ acts on $\Mmb^n_{k, l}$
by permuting the marked points. 
This makes the cohomology groups $H^i(\Mmb^n_{k, l})$
into representations of $\Sg_k \times \Sg_{n-k}$. 

Let us consider the graded representation
$$
{\rm Res}^{\Sg_{k+m}\times \Sg_{n-k-m(l+1)}}_
{\Sg_k \times \Sg_m \times \Sg_{n-k-m(l+1)}}
H^*(\Mmb^{n-lm}_{k+m, l+1})~.
$$
We regard this as a graded representation of $G^{n}_{k, m, l+1}$, 
where $(\Sg_{l+1})^m$ acts trivially, and we denote it 
by $W^{n}_{k, m, l+1}$.

\begin{theorem}\label{thm:ind} 
We have the following identity 
in the ring of graded  
representations of $\Sg_k\times \Sg_{n-k}$: 
\begin{equation} \label{eq:ind}
H^*(\Mmb^n_{k,l})=H^*(\Mmb^n_{k,l+1})
\oplus 
\bigoplus_{m=1}^{\lfloor \frac{n-k}{l+1} \rfloor}
{\rm Ind}^{\Sg_k\times \Sg_{n-k}}_{G^{n}_{k, m, l+1}}~~ 
\left( W^{n}_{k, m, l+1} \otimes \right(H^+(\mathbb{P}^{l-1})\left)^{\otimes m}
\right)~,  
\end{equation}
where on the right hand side, $H^+$ is the part of the cohomology with positive degree. 
Moreover we regard $(H^+(\mathbb{P}^{l-1}))^{\otimes m}$, 
on which $\Sg_m$ acts by permutation of the factors, 
as a representation of $G^{n}_{k, m, l+1}$ 
where every factor except $\Sg_m$ acts trivially.  
\end{theorem}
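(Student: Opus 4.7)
The plan is to apply the general cohomology formula for blow-ups along transversal unions of smooth subvarieties, as proved in Appendix \ref{sec:cohomology}, to the reduction morphism $\rho^n_{k,l}$, and then to reorganise the resulting direct sum according to the action of $\Sg_k\times\Sg_{n-k}$.

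The first step is to verify that the centre of the blow-up is indeed a transversal union. By Lemma \ref{lem:center}, $\rho^n_{k,l}$ is the blow-up of $\Mmb^n_{k,l+1}$ along $Z=\bigcup_{I}\Mmb^n_{k,l+1}(I)$, where $I$ runs over $(l+1)$-subsets of $\{k+1,\ldots,n\}$. Each component has codimension $l$, since forcing $l+1$ marked points to coincide imposes $l$ independent conditions. According to the intersection description in Section~\ref{sec:blowup}, the intersection $\bigcap_{j=1}^{m}\Mmb^n_{k,l+1}(I_j)$ is nonempty iff the $I_j$ are pairwise disjoint, in which case it is isomorphic to the smooth variety $\Mmb^{n-lm}_{k+m,l+1}$ of codimension $lm$, which is the expected value for a transversal intersection, so transversality holds. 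The appendix's blow-up formula therefore yields
$$
H^*(\Mmb^n_{k,l})=H^*(\Mmb^n_{k,l+1})\oplus\bigoplus_{m\geq 1}\ \bigoplus_{\{I_1,\ldots,I_m\}}H^*(\Mmb^{n-lm}_{k+m,l+1})\otimes\bigl(H^+(\Pb^{l-1})\bigr)^{\otimes m},
$$
where the inner sum ranges over unordered families of $m$ pairwise disjoint $(l+1)$-subsets of $\{k+1,\ldots,n\}$, so that automatically $m\leq\lfloor(n-k)/(l+1)\rfloor$.

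The second step is to recognise the inner sum as an induced representation. The group $\Sg_k\times\Sg_{n-k}$ permutes such families transitively for each fixed $m$, with stabiliser precisely $G^n_{k,m,l+1}$; hence the inner sum equals ${\rm Ind}^{\Sg_k\times\Sg_{n-k}}_{G^n_{k,m,l+1}}$ applied to the summand attached to any one fixed configuration. It remains to identify the $G^n_{k,m,l+1}$-module structure on that summand with $W^n_{k,m,l+1}\otimes(H^+(\Pb^{l-1}))^{\otimes m}$. For the first factor, collapsing the $m$ collisions identifies the intersection with $\Mmb^{n-lm}_{k+m,l+1}$, which carries its natural $\Sg_{k+m}\times\Sg_{n-k-m(l+1)}$-action permuting the $k$ original heavy points together with the $m$ new collision points and the $n-k-m(l+1)$ light points; restricting along the inclusion $\Sg_k\times\Sg_m\hookrightarrow\Sg_{k+m}$ produces exactly $W^n_{k,m,l+1}$, and $(\Sg_{l+1})^m$ acts trivially because the $l+1$ points inside each $I_j$ have been identified on the intersection. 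For the second factor, the $j$-th exceptional $\Pb^{l-1}$-bundle is the projectivisation of the normal bundle $\mathbb{C}^{l+1}/\mathbb{C}\cdot(1,\ldots,1)$ of the $j$-th collision locus; the $j$-th copy of $\Sg_{l+1}$ acts on this bundle linearly and so fixes the hyperplane class, hence acts trivially on $H^*(\Pb^{l-1})$, while $\Sg_m$ permutes the $m$ tensor factors and the remaining groups act trivially.

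The main obstacle is the first step: setting up the equivariant blow-up formula of Appendix \ref{sec:cohomology} in a form that cleanly isolates the contribution of each stratum of $Z$ together with its $G^n_{k,m,l+1}$-action, and verifying the transversality of $Z$ in the required sense. Once this is in place, the remainder is essentially combinatorial bookkeeping; the only conceptual subtleties are the two triviality statements for $(\Sg_{l+1})^m$, both of which follow from transparent geometric observations about what gets identified inside each collision and what happens in the normal directions.
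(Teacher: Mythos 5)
Your proposal is correct and follows essentially the same route as the paper: identify $\rho^n_{k,l}$ via Lemma~\ref{lem:center} as a blow-up along a transversal union of smooth codimension-$l$ centres, apply Proposition~\ref{prop:cohomology}, and then group the strata into $\Sg_k\times\Sg_{n-k}$-orbits with stabiliser $G^n_{k,m,l+1}$ to obtain the induced representations. The paper's own proof is just this in two sentences; your additional verifications (transitivity of the orbit action, the two triviality statements for $(\Sg_{l+1})^m$) are exactly the details hidden in its phrase ``after taking into account the action of $\Sg_k\times\Sg_{n-k}$.''
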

\begin{proof}
By Lemma \ref{lem:center}, the reduction morphism
$\rho^n_{k,l}: \Mmb^n_{k,l}\to \Mmb^n_{k,l+1}$
is the blow-up along the transversal union of smooth subvarieties of 
codimension $l$. Therefore we can apply the result in 
Appendix \ref{sec:cohomology}.  The theorem 
follows from Proposition \ref{prop:cohomology}
after taking into account the action of $\Sg_k\times \Sg_{n-k}$.
\end{proof}

Formula~\eqref{eq:ind} can be used inductively, knowing the $\Sg_m$-representation $\bigl(H^+(\mathbb{P}^{l-1})\bigr)^{\otimes m}$ (see Lemma~\ref{lem:PF}) and using the fact that $H^*(\Mmb^n_{k,1})$ equals ${\rm Res}^{\Sg_n}_{\Sg_k \times \Sg_{n-k}} H^*(\Mmb^n_{0,1})$.

\begin{corollary} \label{cor:ind} 
By induction, with the base cases being $\Mmb^n_{0, r(n,0)}$ for all $n\geq 3$,
we can use formula \eqref{eq:ind} to compute the graded $\Sg_k\times \Sg_{n-k}$-representation $H^*(\Mmb^n_{k,l})$ for any $n$, $k$
and~$l$.
\end{corollary}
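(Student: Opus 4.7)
The plan is to induct on the number of marked points $n$, applying Theorem~\ref{thm:ind} at each step to propagate information along the sequence~\eqref{eq:blow-up} for each value of $k$. The base case $n = 3$ is trivial, since every $\Mmb^3_{k,l}$ is a point. For the inductive step, fix $n \geq 4$ and assume that $H^*(\Mmb^{n'}_{k',l'})$ has been determined as a $\Sg_{k'} \times \Sg_{n'-k'}$-module for every $n' < n$ and every admissible $(k',l')$.

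The crucial observation is that each correction summand on the right-hand side of \eqref{eq:ind} involves $H^*(\Mmb^{n-lm}_{k+m,\, l+1})$ tensored with $(H^+(\Pb^{l-1}))^{\otimes m}$. The latter factor vanishes for $l = 1$, so the formula reduces to $H^*(\Mmb^n_{k,1}) = H^*(\Mmb^n_{k,2})$, consistent with $\rho^n_{k,1}$ being an isomorphism. For $l \geq 2$ and $m \geq 1$ we have $lm \geq 2$ and hence $n - lm \leq n - 2 < n$, so $H^*(\Mmb^{n-lm}_{k+m,\,l+1})$ is known by the inductive hypothesis, while the $\Sg_m$-action on $(H^+(\Pb^{l-1}))^{\otimes m}$ is supplied by Lemma~\ref{lem:PF}. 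Every correction term in \eqref{eq:ind} is thus explicit in terms of the inductive data.

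I would now feed in the base case $H^*(\Mmb^n_{0, r(n,0)})$ and run \eqref{eq:ind} for $k = 0$ with $l$ descending from $r(n,0)-1$ down to $1$; at each step $H^*(\Mmb^n_{0, l})$ is the direct sum of the previously computed $H^*(\Mmb^n_{0, l+1})$ and the known correction. This delivers $H^*(\Mmb_{0,n}) = H^*(\Mmb^n_{0,1})$ together with all intermediate $H^*(\Mmb^n_{0,l})$. For each $k \geq 1$ the identification $H^*(\Mmb^n_{k,1}) = {\rm Res}^{\Sg_n}_{\Sg_k \times \Sg_{n-k}} H^*(\Mmb^n_{0,1})$ supplies the boundary datum at the opposite end; I would then apply \eqref{eq:ind} in the ascending direction, reading off $H^*(\Mmb^n_{k, l+1})$ as the complement of the computed correction inside $H^*(\Mmb^n_{k,l})$, for $l = 1, \ldots, r(n,k)-1$. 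The argument has no genuine obstacle: it is a direct unraveling of Theorem~\ref{thm:ind} along \eqref{eq:blow-up}, the only delicate bookkeeping being that the formula must be used in opposite directions for $k = 0$ versus $k \geq 1$, which is why two distinct boundary inputs (the GIT base case and the restriction identity) enter the same inductive step.
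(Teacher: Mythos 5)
Your proposal is correct and is essentially the argument the paper intends: an induction on $n$ in which the correction terms of \eqref{eq:ind} only involve spaces with $n-lm<n$ marked points, the GIT description of $\Mmb^n_{0,r(n,0)}$ seeds the $k=0$ case by descending in $l$, and the restriction identity $H^*(\Mmb^n_{k,1})={\rm Res}^{\Sg_n}_{\Sg_k\times\Sg_{n-k}}H^*(\Mmb^n_{0,1})$ seeds the $k\geq 1$ cases by ascending in $l$. This matches the paper's (unwritten) proof, which is exactly the sketch given in the sentence preceding the corollary.
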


\section{The algorithm}\label{sec:algorithm}
\subsection{Poincar\'{e}-Serre polynomials} 
In this section we will make Corollary~\ref{cor:ind} more explicit using 
Poincar\'{e}-Serre polynomials, see Appendix~\ref{sec:character} for the notation. 

\begin{definition}
The $(\Sg_k \times \Sg_{n-k})$-equivariant Poincar\'{e}-Serre polynomial of 
$\Mmb^n_{k, l}$ is defined by 
\begin{equation*}
E^n_{k, l}(q):=\sum_{i=0}^{n-1}
\ch_{k, n-k}\left(H^{2i}(\Mmb^n_{k,l})\right) q^{i}\in \Lambda^{x, y} [q]~.
\end{equation*}
\end{definition}

It is straightforward to show the following lemma. 
\begin{lemma}\label{lem:PF}
The graded $\Sg_m$-character of $\bigl(H^+(\mathbb{P}^{l-1})\bigr)^{\otimes m}$
is given in $\Lambda^y[q]$ by
\begin{equation*}
F^y_{m,l}(q):=\sum_{k=1}^{m(l-1)}
\left(
\sum_{(m_1, \cdots, m_{l-1}) \in I_{l,k}}
\; \prod_{j=1}^{l-1} s^y_{(m_j)}
\right)
q^k
~,
\end{equation*}
where $I_{l,k}:=\{(m_1, \cdots, m_{l-1}): m_i \geq 0,\sum_{i} i m_i=k, \sum_{i} m_i=m \}$.
\end{lemma}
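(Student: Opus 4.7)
The plan is to identify $H^+(\Pb^{l-1})$ explicitly as a graded vector space and then compute the character of its $m$-th tensor power by decomposing into $\Sg_m$-orbits on a natural basis.

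First I would observe that $H^*(\Pb^{l-1})$ is one-dimensional in each even degree $0,2,\ldots,2(l-1)$ and vanishes in odd degrees, so with the convention that $q$ tracks the cohomological degree halved, $H^+(\Pb^{l-1}) = \bigoplus_{i=1}^{l-1}\mathbb{Q}\,e_i$ with $e_i$ of $q$-weight $i$. The tensor power $\bigl(H^+(\Pb^{l-1})\bigr)^{\otimes m}$ then has a basis of pure tensors $e_{i_1}\otimes\cdots\otimes e_{i_m}$ indexed by sequences $(i_1,\ldots,i_m)\in\{1,\ldots,l-1\}^m$, each of $q$-weight $i_1+\cdots+i_m$, and $\Sg_m$ acts by permutation of the tensor factors.

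Next, I would decompose this permutation representation into $\Sg_m$-orbits. Two sequences lie in the same orbit if and only if each value $j\in\{1,\ldots,l-1\}$ occurs with the same multiplicity $m_j$ in both; hence the orbits are indexed by tuples $(m_1,\ldots,m_{l-1})$ with $m_j\geq 0$ and $\sum_j m_j=m$, and the common $q$-weight on such an orbit is $\sum_j j\cdot m_j$. The stabilizer of the representative in which $j$ appears in positions $m_1+\cdots+m_{j-1}+1,\ldots,m_1+\cdots+m_j$ is the Young subgroup $\Sg_{m_1}\times\cdots\times\Sg_{m_{l-1}}$. Consequently, as a graded $\Sg_m$-representation,
\[
\bigl(H^+(\Pb^{l-1})\bigr)^{\otimes m} \;\cong\; \bigoplus_{(m_1,\ldots,m_{l-1})} q^{\sum_j j m_j}\cdot {\rm Ind}_{\Sg_{m_1}\times\cdots\times\Sg_{m_{l-1}}}^{\Sg_m}\mathbf{1},
\]
the sum running over tuples with $\sum_j m_j=m$.

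The final step is to apply the Frobenius characteristic map to the $y$-variables. By the classical formula recalled in Appendix~\ref{sec:character}, the image of the trivial representation of the Young subgroup $\Sg_{m_1}\times\cdots\times\Sg_{m_{l-1}}$ induced to $\Sg_m$ is the product $h_{m_1}\cdots h_{m_{l-1}}=s^y_{(m_1)}\cdots s^y_{(m_{l-1})}$ in $\Lambda^y$. Collecting contributions by total $q$-weight $k$ (noting that $I_{l,k}$ is empty for $k<m$, so the lower bound $k=1$ in the statement is harmless) yields exactly the stated formula for $F^y_{m,l}(q)$. There is no genuine obstacle; the argument is a direct unpacking of definitions combined with the standard character identity $\ch\bigl({\rm Ind}_{\Sg_{m_1}\times\cdots\times\Sg_{m_{l-1}}}^{\Sg_m}\mathbf{1}\bigr)=h_{m_1}\cdots h_{m_{l-1}}$.
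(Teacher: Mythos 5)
Your proof is correct, and it is precisely the ``straightforward'' argument the paper leaves to the reader (the lemma is stated without proof): decompose the permutation basis of pure tensors into $\Sg_m$-orbits indexed by multiplicity tuples and apply $\ch_m\bigl({\rm Ind}_{\Sg_{m_1}\times\cdots\times\Sg_{m_{l-1}}}^{\Sg_m}\mathbf{1}\bigr)=s_{(m_1)}\cdots s_{(m_{l-1})}$. The only point worth making explicit is that the permutation action carries no Koszul signs because $H^+(\mathbb{P}^{l-1})$ is concentrated in even cohomological degree, which is consistent with the paper's convention that $\Sg_m$ acts by plain permutation of the factors.
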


\begin{proposition} \label{prop:algor}
For any $\nu \in \Par(m)$ we write $\frac{\partial}{\partial{p^x_{\nu}}} E^{n-lm}_{k+m,l+1}(q)=\sum_{\lambda,\mu} a_{n,k,m,l}^{\nu,\lambda,\mu}(q) \, p^x_{\lambda} p^y_{\mu} $ for some $a_{n,k,m,l}^{\nu,\lambda,\mu}(q) \in \mathbb{Q}[q]$. 
We then have 
  \begin{equation}
    \label{eq:algor}
E^n_{k,l}(q)=E^n_{k,l+1}(q)+
\sum_{m=1}^{\lfloor \frac{n-k}{l+1} \rfloor}
\sum_{\nu \in \Par(m)} \sum_{\lambda,\mu} a_{n,k,m,l}^{\nu,\lambda,\mu}(q) \, p^x_{\lambda}p^y_{\mu} \, \Bigl(\bigl(p^y_{\nu} * F^y_{m,l}(q) \bigr) \circ s^y_{(l+1)}\Bigr)  ~,        
  \end{equation}
where $*$ and $\circ$ act trivially on $q$.
\end{proposition}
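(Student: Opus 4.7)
The plan is to take the Frobenius characteristic of both sides of the identity in Theorem \ref{thm:ind} and translate each operation into symmetric-function language. The LHS becomes $E^n_{k,l}(q)$ and the $H^*(\Mmb^n_{k,l+1})$ summand on the RHS becomes $E^n_{k,l+1}(q)$ by the very definition of the Poincar\'e--Serre polynomial, so the real work is computing the characteristic of the induced representation contributing for each $m$.

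First I would peel off the $\Sg_k$ factor, which plays no role in the induction beyond labelling the $x$-alphabet. The key step is to realise $W^n_{k,m,l+1}$ as the restriction of $H^*(\Mmb^{n-lm}_{k+m,l+1})$ from $\Sg_{k+m}$ to $\Sg_k \times \Sg_m$ on the $x$-side, and then to convert this restriction into a symmetric-function operation. Using the primitivity $p_r(x'\cup x'') = p_r(x') + p_r(x'')$, one verifies the identity
\[
F(x'\cup x'') = \sum_{\nu} \Bigl(\frac{\partial F}{\partial p^x_\nu}\Bigr)(x') \cdot p^{x''}_\nu,
\]
where $\partial/\partial p^x_\nu$ is the iterated partial derivative with $p^x_1, p^x_2,\ldots$ treated as independent variables. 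Applied to $E^{n-lm}_{k+m,l+1}(q)$ and collected by the partitions $\nu \in \Par(m)$ (which select exactly the $\Sg_m$-part of the restriction), this produces the coefficients $a_{n,k,m,l}^{\nu,\lambda,\mu}(q)$: the factor $p^x_\lambda$ records the residual $\Sg_k$-character, $p^y_\mu$ the untouched $\Sg_{n-k-m(l+1)}$-character, and $p^{x''}_\nu$ encodes the extracted $\Sg_m$-character.

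Next I would combine that $\Sg_m$-character with the $\Sg_m$-character $F^y_{m,l}(q)$ of $\bigl(H^+(\Pb^{l-1})\bigr)^{\otimes m}$ supplied by Lemma \ref{lem:PF}. Inner tensor product of $\Sg_m$-representations corresponds to the Kronecker product $*$, and since $*$ is insensitive to the formal alphabet, the result may be rewritten in the $y$-alphabet as $p^y_\nu * F^y_{m,l}(q)$. Because $(\Sg_{l+1})^m$ acts trivially on both tensor factors, the resulting $\Sg_{l+1}\wr \Sg_m$-representation is inflated from $\Sg_m$ along the quotient map, and the standard Frobenius formula for inducing from a wreath product $\Sg_{l+1}\wr\Sg_m \hookrightarrow \Sg_{m(l+1)}$ turns that induction into plethysm by $s^y_{(l+1)} = h^y_{l+1}$, yielding the factor $\bigl(p^y_\nu * F^y_{m,l}(q)\bigr)\circ s^y_{(l+1)}$. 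Inducing once more from $\Sg_{m(l+1)} \times \Sg_{n-k-m(l+1)}$ to $\Sg_{n-k}$ is plain multiplication in $\Lambda^y$ and contributes the $p^y_\mu$ already isolated above; putting the $p^x_\lambda$ factor in front and summing over $\nu,\lambda,\mu$ and $m$ reproduces \eqref{eq:algor}.

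The main obstacle is bookkeeping: one must handle the alphabet conventions carefully so that the $\Sg_m$-character extracted from $W^n_{k,m,l+1}$ (naturally sitting in the $x$-alphabet after restriction) can be relabelled and then tensored with the $y$-alphabet expression $F^y_{m,l}(q)$ before plethysm is applied. Once this dictionary between restriction, power-sum derivative, Kronecker product, and plethysm is in place, the identity follows term by term from Theorem \ref{thm:ind}.
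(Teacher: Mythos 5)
Your proposal is correct and follows the same route as the paper: the paper's proof is the one-line remark that the identity follows from Theorem~\ref{thm:ind} together with the dictionary of Appendix~\ref{sec:character} (restriction $\leftrightarrow$ power-sum derivatives, inner tensor product $\leftrightarrow$ $*$, induction from a product $\leftrightarrow$ multiplication, induction from a wreath product with trivial inner factor $\leftrightarrow$ plethysm with $s_{(l+1)}$), which is exactly the translation you carry out term by term. Your write-up simply makes explicit the bookkeeping that the paper leaves to the reader.
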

\begin{proof} This follows directly from Theorem~\ref{thm:ind} using 
Appendix~\ref{sec:character}.
\end{proof}

\subsection{Base cases}
Here we will find formulas for $E^n_{0, r(n, 0)}(q)$ which are the base cases in the induction. 

\begin{lemma}[\cite{KiemMoon}, Theorem 1.1] \label{lem:git}
 $\,$ \\
(i) If $n$ is odd,  
$\Mmb^n_{0, r(n, 0)}$
is isomorphic to the GIT quotient 
$(\mathbb{P}^1)^n/\!\!/\PGL(2)$.
\\
(ii) If $n$ is even, $\Mmb^n_{0, r(n, 0)}$
is isomorphic to Kirwan's desingularization of  
$(\mathbb{P}^1)^n/\!\!/\PGL(2)$ constructed in~\cite{Kirwan}.
\end{lemma}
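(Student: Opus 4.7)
The plan is to match the stability condition for the weight $\mathbf{a}=(1/l,\ldots,1/l)$ with $l=r(n,0)=\lfloor(n-1)/2\rfloor$ against the GIT (semi)stability condition for the symmetric linearization $\boxtimes_{i=1}^n \mathcal{O}_{\Pb^1}(1)$ on $(\Pb^1)^n$. Recall that a tuple $(p_1,\ldots,p_n)$ is GIT semistable (respectively stable) iff no point of $\Pb^1$ occurs with multiplicity $\geq n/2$ (respectively $>n/2$) among the coordinates, while $\mathbf{a}$-stability of an irreducible curve allows up to $l$ marked points to coincide and no more, since $l\cdot\tfrac{1}{l}=1$. My first step would be to compare the two open loci of smooth irreducible configurations via the natural $\PGL(2)$-equivariant morphism induced by forgetting the nodal structure.

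For part (i), when $n$ is odd, $n/2\notin\mathbb{Z}$ so GIT semistability equals GIT stability, and the bound ``no point appears more than $(n-1)/2=l$ times'' matches $\mathbf{a}$-stability of an irreducible curve on the nose. I would next argue that no reducible $\mathbf{a}$-stable curve occurs at this weight: a rational tail carrying $m$ marked points satisfies condition (iii) of Definition~\ref{def:definition2} only if $1+m/l>2$, forcing $m\geq l+1=(n+1)/2$, which leaves the complementary component with at most $(n-1)/2$ marked points and thus weight strictly less than $1$ plus its single node, violating stability. Hence $\Mmb^n_{0,r(n,0)}$ parametrizes only smooth irreducible configurations, and I would conclude by observing that the induced morphism $\Mmb^n_{0,r(n,0)}\to(\Pb^1)^n/\!\!/\PGL(2)$ is a bijection between smooth projective irreducible varieties of dimension $n-3$, and therefore an isomorphism.

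For part (ii), when $n$ is even, $l=n/2-1$ and the strictly semistable locus of $(\Pb^1)^n$ is nonempty: it consists of the configurations that split as two clusters of $n/2$ points at two distinct points of $\Pb^1$, together with their specializations. Such configurations are not $\mathbf{a}$-stable on an irreducible curve because $n/2>l$; the corresponding $\mathbf{a}$-stable limit must break each $n/2$-cluster off onto its own rational tail. Kirwan's desingularization of $(\Pb^1)^n/\!\!/\PGL(2)$ systematically blows up the closure of the strictly semistable locus (in the canonical order prescribed by stabilizer dimension) to produce a smooth variety whose exceptional loci parametrize exactly these split-off configurations. I would match Kirwan's blow-up stratification with the boundary stratification of $\Mmb^n_{0,r(n,0)}$ described by Lemma~\ref{lem:center}, identifying the exceptional divisors of Kirwan's construction with the boundary divisors of $\Mmb^n_{0,r(n,0)}$ parametrizing reducible $\mathbf{a}$-stable curves with an $n/2$-clustered rational tail.

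The main obstacle will be part (ii): one must verify locally that Kirwan's blow-up centers and their order of blow-up coincide with the loci along which the Hassett universal curve develops a rational tail, and that the analytic-local structure of the two constructions agree. This rigorous comparison of the two GIT/Hassett resolutions is precisely the content of \cite[Theorem~1.1]{KiemMoon}.
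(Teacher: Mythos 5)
The paper offers no proof of this lemma: it is stated with the citation \cite[Theorem~1.1]{KiemMoon} and the content is taken from that reference wholesale. Your proposal therefore does strictly more than the paper. Part (i) of your argument is essentially complete and correct: for $n$ odd the weight $(1/l,\dots,1/l)$ with $l=(n-1)/2$ reproduces exactly the GIT (semi)stability condition on irreducible configurations, and your tail count (two end components of a reducible genus-zero curve would each need at least $l+1=(n+1)/2$ markings, exceeding $n$ in total) rules out the boundary, after which bijectivity onto the normal GIT quotient gives the isomorphism. Part (ii) correctly identifies the strictly semistable locus, the corresponding two-component $\mathbf{a}$-stable curves with $n/2$ markings on each side, and the need to match Kirwan's blow-up centers with Hassett's boundary divisors --- but, as you concede, the local comparison of the two resolutions is exactly the nontrivial content of Kiem--Moon, so at that point your proof defers to the same source the paper cites. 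One genuine slip to fix: you state the GIT conditions backwards --- semistability for the symmetric linearization means no point of $\Pb^1$ has multiplicity \emph{greater than} $n/2$, and stability means none has multiplicity \emph{at least} $n/2$ (stability must be the stronger condition); your subsequent reasoning uses the correct convention, so nothing downstream breaks, but the displayed definitions should be swapped.
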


\begin{remark} \label{rmk:proj}
If $n \leq 6$ we see that $\Mmb^n_{0, r(n, 0)}$ is isomorphic to $\Mmb^n_{0, 1}$. Note also that $\Mmb^n_{1, r(n,1)}$ is isomorphic to $\Pb^{n-3}$ and that 
$$E^n_{1, r(n, 1)}(q)=s_{(1)}^x s_{(n-1)}^y \,\sum_{i=0}^{n-3}q^i~.$$
\end{remark} 

\begin{definition}
For $n\geq 1$ we define
$$P_n(q):=\sum_{i=0}^{\lfloor \frac{n}{2}\rfloor} s_{(n-i)}  s_{(i)} \ (q^{n-i}-q^{i+1})
\in \Lambda[q]~.$$
\end{definition}

\begin{lemma}\label{lem:git2} 
If we suppose $n=2m+1$, then 
$$
E^n_{0,m}(q)=\frac{P_n(q)}{q^3-q}.
$$
\end{lemma}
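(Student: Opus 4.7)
The strategy is to identify $\Mmb^n_{0,m}$ with the GIT quotient $(\Pb^1)^n/\!\!/\PGL(2)$ via Lemma~\ref{lem:git}(i), compute the $\Sg_n$-equivariant Serre polynomial of the semistable locus, and divide by the Serre polynomial of $\PGL(2)$. Since $n=2m+1$ is odd, the symmetric GIT problem on $(\Pb^1)^n$ admits no strictly semistable points: a configuration is unstable exactly when strictly more than $n/2$ of the marked points coincide at a single point of $\Pb^1$, and because $2(m+1)>n$ such a maximal cluster is unique whenever it exists.

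Exploiting this uniqueness I stratify the unstable locus disjointly as
\[
((\Pb^1)^n)^{\mathrm{unst}} = \bigsqcup_{k=m+1}^{n}\;\bigsqcup_{T\subset\{1,\ldots,n\},\;|T|=k}V_T,
\]
where $V_T$ consists of configurations whose unique maximal cluster is $\{x_i:i\in T\}$. Projecting $V_T$ to the common value of the cluster realizes $V_T$ as a Zariski-locally trivial $(\Ab^1)^{n-k}$-bundle over $\Pb^1$, and the stabilizer $\Sg_k\times\Sg_{n-k}$ of $T$ acts with $\Sg_k$ trivial (the cluster points are literally equal) and with $\Sg_{n-k}$ trivial on the top compactly supported cohomology of $(\Ab^1)^{n-k}$. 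Inducing from $\Sg_k\times\Sg_{n-k}$ up to $\Sg_n$ yields
\[
\sum_{k=m+1}^{n} s_{k}\,s_{n-k}(1+q)\,q^{n-k}
\]
for the $\Sg_n$-equivariant Serre polynomial of the unstable locus. Together with the standard expression $\sum_{i=0}^{n}s_{n-i}s_i q^i$ for $(\Pb^1)^n$ and the palindromic reindexing $i\leftrightarrow n-i$ (valid since $n$ is odd), subtraction gives
\[
\sum_{i=0}^{m} s_{n-i}\,s_i\,(q^{n-i}-q^{i+1}) \;=\; P_n(q)
\]
for the $\Sg_n$-equivariant Serre polynomial of $((\Pb^1)^n)^{\mathrm{ss}}$. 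The isomorphism $\PGL(2)\cong\Pb^3\setminus Q$, with $Q\cong\Pb^1\times\Pb^1$ the quadric of degenerate matrices, then gives $E(\PGL(2);q)=(1+q+q^2+q^3)-(1+q)^2=q^3-q$, and the lemma follows upon dividing.

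The main obstacle is justifying this division, since the $\PGL(2)$-action on the stable locus has finite but in general non-trivial stabilizers (for configurations with exceptional dihedral or cyclic symmetry), so the quotient is not literally a principal $\PGL(2)$-bundle. I would resolve this by passing to the Deligne--Mumford quotient stack $[((\Pb^1)^n)^{\mathrm{ss}}/\PGL(2)]$, whose rational cohomology coincides with that of the coarse moduli space $(\Pb^1)^n/\!\!/\PGL(2)$, and invoking the Grothendieck-ring identity $E([X/G])\cdot E(G)=E(X)$ for a connected algebraic group $G$ acting on a variety $X$. Since the $\Sg_n$-action on $(\Pb^1)^n$ commutes with the $\PGL(2)$-action, this identity upgrades automatically to the $\Sg_n$-equivariant setting.
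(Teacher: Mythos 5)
Your proposal is correct and follows essentially the same route as the paper: identify $\Mmb^n_{0,m}$ with $(\Pb^1)^n/\!\!/\PGL(2)$, stratify the unstable locus by the (necessarily unique, since $n$ is odd) maximal cluster to get the equivariant class $(q+1)\sum_{i=0}^m s_{(n-i)}s_{(i)}q^i$, subtract from $\sum_{i=0}^n s_{(n-i)}s_{(i)}q^i$ to obtain $P_n(q)$, and divide by $q^3-q$. The only difference is presentational: you index the strata by the subset $T$ rather than by the cluster point $x\in\Pb^1$, and you spell out the justification for dividing by $E(\PGL(2))$, which the paper handles by appeal to \cite{Thaddeus}.
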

\begin{proof} 
This formula is an $\Sg_n$-equivariant version of  
in~\cite[Proposition 6.1]{Thaddeus}. 
We first observe that the Poincar\'e-Serre polynomial of 
$(\Pb^1)^n$ equals $s_{(n)} \circ \bigl((q+1)s_{(1)}\bigr)=\sum_{i=0}^n s_{(n-i)}s_{(i)}q^i$. 
The unstable points in 
$(\Pb^1)^n$ under the action of $\PGL(2)$ are the ones for which at 
least $m+1$ coordinates are equal. The loci of points in $(\Pb^1)^n$ 
where at least $m+1$ coordinates are equal to a fixed point $x \in \Pb^1$ 
are disjoint for each choice of $x$. Thus, we find the Poincar\'e-Serre polynomial 
of the locus of unstable points to be $(q+1)\sum_{i=0}^m s_{(n-i)}s_{(i)}q^i$. 
Removing this loci and dividing by $\PGL(2)$  gives the answer, using the additivity of Poincar\'e-Serre polynomials (defined using the Euler-characteristic in the non-projective case). \end{proof}

\begin{lemma}\label{lem:git3} 
Suppose $n=2m$, then we have
$$
E^n_{0, m-1}(q)= 
\frac{P_n(q)
-s_{(m)}^2q^{m}
+(s_{(2)} \circ s_{(m)}) q
+(s_{(1,1)}\circ s_{(m)})q^2}{q^3-q}
+s_{(2)}\circ \Bigl(s_{(m)}\sum_{i=0}^{m-2}q^i \Bigr)~,
$$
where $p_i \circ (p_j \, q^k):=p_{ij}\, q^{ik}$.
\end{lemma}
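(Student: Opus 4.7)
My strategy would follow the proof of Lemma~\ref{lem:git2}, but incorporate additional contributions for the strictly semistable locus and for Kirwan's exceptional divisor, both of which are present only in the even case. By Lemma~\ref{lem:git}(ii), $\Mmb^n_{0, m-1}$ is Kirwan's desingularization of $M := (\Pb^1)^n /\!\!/ \PGL(2)$. I would decompose $(\Pb^1)^n = X^s \sqcup T_A \sqcup T_B \sqcup U$, where $U$ is the unstable locus (some value of $\Pb^1$ hit by $\ge m+1$ marks), $X^s$ is the stable locus (no value hit by $\ge m$), $T_B$ is the strictly semistable locus with two distinct values each of multiplicity~$m$, and $T_A$ is the strictly semistable locus with exactly one value of multiplicity~$m$.

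I would then compute each piece's $\Sg_n$-equivariant Poincar\'e--Serre polynomial via additivity. The unstable contribution $(q+1)\sum_{j=0}^{m-1} s_{(n-j)}s_{(j)}q^j$ is computed exactly as in Lemma~\ref{lem:git2}. Each component of $T_B$ (indexed by an unordered partition $\{I, I^c\}$ of $\{1, \ldots, 2m\}$ into size-$m$ subsets) is isomorphic to $(\Pb^1)^2 \setminus \Delta$ via the assignment of the two distinct values to $I$ and $I^c$; its stabilizer $\Sg_m \wr \Sg_2$ acts trivially through the inner $\Sg_m \times \Sg_m$ and by swap through the outer $\Sg_2$, and inducing to $\Sg_{2m}$ (noting that the swap produces both trivial and sign representations on $H^2((\Pb^1)^2)$) yields $(s_{(2)} \circ s_{(m)}) q^2 + (s_{(1,1)} \circ s_{(m)}) q$. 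Each component of $T_A$ (for fixed ordered $(I, I^c)$) is a $\Pb^1$-bundle with fibers $(\Ab^1)^m \setminus \Delta$ and trivial $\Sg_m \times \Sg_m$-action on cohomology, giving $s_{(m)}^2 (q^3-q)\sum_{i=0}^{m-2}q^i$ after induction.

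Using the identity $s_{(m)}^2 = (s_{(2)} \circ s_{(m)}) + (s_{(1,1)} \circ s_{(m)})$ together with additivity, a computation parallel to that in Lemma~\ref{lem:git2} yields
\[
\chi(X^s) = P_n(q) - s_{(m)}^2 q^m + (s_{(2)} \circ s_{(m)}) q + (s_{(1,1)} \circ s_{(m)}) q^2.
\]
Dividing by $\chi(\PGL(2)) = q^3 - q$ (valid since $\PGL(2)$ acts on $X^s$ with generically trivial stabilizer) gives the Poincar\'e--Serre polynomial of the open stable locus $M^s \subset \Mmb^n_{0, m-1}$. It remains to add the contribution from $\Mmb^n_{0, m-1} \setminus M^s$, which is the exceptional locus over the $\binom{2m}{m}/2$ singular Type-B images in $M$.

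By the Luna slice theorem, the local model of $M$ near each Type-B singularity is $(\mathbb{C}_+^{m-1} \oplus \mathbb{C}_-^{m-1})/\!\!/(\mathbb{G}_m \rtimes \mathbb{Z}/2)$, and Kirwan's procedure produces an exceptional fiber with rational cohomology $\mathrm{Sym}^2 H^*(\Pb^{m-2})$; inducing from $\Sg_m \wr \Sg_2$ to $\Sg_{2m}$ yields the extra term $s_{(2)} \circ \bigl(s_{(m)}\sum_{i=0}^{m-2}q^i\bigr)$, completing the formula. The main obstacles are (i) identifying the rational cohomology of Kirwan's exceptional fiber as $\mathrm{Sym}^2 H^*(\Pb^{m-2})$ via the Luna slice theorem together with Kirwan's construction (with reference to \cite{KiemMoon}), and (ii) the equivariant $T_B$ computation, where the delicate $(s_{(1,1)} \circ s_{(m)}) q$ piece arises from the outer $\Sg_2$-swap's sign representation on $H^2((\Pb^1)^2)$.
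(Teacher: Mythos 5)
Your stratification of $(\Pb^1)^n$ into unstable, two types of strictly semistable, and stable loci, the equivariant bookkeeping for each piece, and the division by $q^3-q$ all match the paper's computation, and your $T_A$ and $T_B$ contributions agree with the paper's (note $(q^3-q)\sum_{i=0}^{m-2}q^i=(q+1)(q^m-q)$). The one place you diverge is the final correction term, and there your justification has a genuine flaw. The paper does not argue through Kirwan's blow-up procedure at all: it identifies $\Mmb^n_{0,m-1}\setminus M^s$ modularly, via Hassett, as the moduli of weighted curves with two components (the node playing the role of a weight-one point), i.e.\ a disjoint union over unordered pairs $\{I,I^c\}$ of $\Mmb^{m+1}_{1,m-1}\times\Mmb^{m+1}_{1,m-1}\cong\Pb^{m-2}\times\Pb^{m-2}$, and then induces from $\Sg_m\wr\Sg_2$ the \emph{full} tensor square $H^*(\Pb^{m-2})^{\otimes 2}$ with the outer $\Sg_2$ acting by swapping the factors.

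Your version of this step is internally inconsistent. First, the Luna/Kirwan local model should use the stabilizer of the closed strictly semistable orbit point, which is $\mathbb{G}_m$, not $\mathbb{G}_m\rtimes\mathbb{Z}/2$: the involution $z\mapsto 1/z$ exchanges the roles of $I$ and $I^c$ and hence moves the point within its orbit rather than fixing it. Consequently the exceptional fiber is $\Pb(\mathbb{C}^{m-1}_+\oplus\mathbb{C}^{m-1}_-)/\!\!/\mathbb{G}_m\cong\Pb^{m-2}\times\Pb^{m-2}$ (the two factors being distinguished by which marked points they carry), with cohomology $H^*(\Pb^{m-2})^{\otimes 2}$, not $\mathrm{Sym}^2H^*(\Pb^{m-2})$. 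Second, the two claims you make do not combine to give your stated answer: if the fiber cohomology as a $\Sg_m\wr\Sg_2$-module really were $\mathrm{Sym}^2H^*(\Pb^{m-2})$ (on which the wreath product acts trivially), induction would yield $\bigl(s_{(2)}\circ s_{(m)}\bigr)\sum_{0\leq i\leq j\leq m-2}q^{i+j}$, which is strictly smaller than $s_{(2)}\circ\bigl(s_{(m)}\sum_{i=0}^{m-2}q^i\bigr)$; the latter contains the constituents $\bigl(s_{(1,1)}\circ s_{(m)}\bigr)$ in the mixed degrees $q^{i+j}$ with $i\neq j$, coming precisely from the swap action on the off-diagonal part of the tensor square, and these are needed (for instance they produce the $(q^2+q)s_{(5,1)}$ term in $E^6_{0,1}(q)$). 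So the term you wrote down is correct, but it does not follow from your description of the fiber; replacing $\mathrm{Sym}^2H^*(\Pb^{m-2})$ by $H^*(\Pb^{m-2})^{\otimes 2}$ with the swap action (or, more simply, adopting the paper's modular description of the boundary) repairs the argument.
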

\begin{proof}
This formula is an $\Sg_n$-equivariant version of the formula in \cite[Section~9.1]{Kirwan}. As in the odd case we begin by considering the locus in $(\Pb^1)^n$ of unstable points, that is, points for which at least $m+1$ of the coordinates are equal. The Poincar\'e-Serre polynomial of this locus equals $(q+1)\sum_{i=0}^{m-1} s_{(n-i)}s_{(i)}q^i$. The strictly semi-stable 
points of $(\Pb^1)^n$ are the ones for which one can find a point in $\Pb^1$ such that precisely $m$ of the coordinates are equal to this point. Consider first the sub-locus of strictly 
semi-stable points in $(\Pb^1)^n$ where there are two distinct points such that there are $m$ of the coordinates equal to each one of them. Its contribution equals $(s_{(2)} \circ s_{(m)})q^2 +(s_{(1,1)} \circ s_{(m)})q$, which follows from the fact that the Poincar\'e-Serre polynomial of $(\Pb^1)^2$ is $s_{(2)} (q^2+q+1)+s_{(1,1)}q$. The contribution from the rest of the semi-stable locus is directly found to be $s_{(m)}^2 (q+1)(q^m-q)$. Thus the contribution from the stable locus is given by
\begin{multline*}
\sum_{i=0}^{2m} s_{(2m-i)} s_{(i)} q^i \, -\, (q+1)\sum_{i=0}^{m-1} s_{(2m-i)} s_{(i)} q^i
\, -\, s_{(m)}^2(q+1)(q^m-q)
\,-\, \left( s_{(2)}\circ s_{(m)}\right) q^2\, \\ -\, \left( s_{(1,1)}\circ s_{(m)}\right) q
=P_n(q) - s_{(m)}^2q^m+\left( s_{(2)}\circ s_{(m)}\right) q+\left( s_{(1,1)}\circ s_{(m)}\right) q^2.
\end{multline*}
Dividing this by $\PGL(2)$, we obtain the first term of the formula.
Note that we used that $s_{(m)}^2=s_{(2)}\circ s_{(m)}+s_{(1,1)}\circ s_{(m)}$. 

We should then add the contribution coming from the moduli space $M$ of weighted curves with two components, each component having $m$-marked points of weight $1/(m-1)$. The node on such a component can be viewed as a point of weight $1$ and hence $M$ consists of copies of $\Mmb^{m+1}_{1,m-1} \times \Mmb^{m+1}_{1,m-1}$, one for each choice of distribution of labels of marked points on each component. The $\Sg_n$-representation $H^*(M)$ will then equal the induced representation 
${\rm Ind}^{\Sg_n}_{\Sg_2 \sim \Sg_m} H^*(\Mmb^{m+1}_{1,m-1})^{\otimes 2}$, where $\Sg_2$ acts trivially. 
Since $E^{m+1}_{1, m-1}(q)=s_{(1)}^x s_{(m)}^y \,(\sum_{i=0}^{m-2}q^i)$, by Remark~\ref{rmk:proj}, we find the Poincar\'e-Serre polynomial of $M$ to be $s^y_{(2)}\circ \bigl(\frac{\partial}{\partial{p^x_{1}}} E^{m+1}_{1, m-1}(q)\bigr)$.
\end{proof}

\subsection{Examples}
The first case for which $\Mmb^n_{0,1}$ is not isomorphic to $\Mmb^n_{0,r(n,0)}$ is when $n=7$. We will now use equation~\eqref{eq:algor} to compute the cohomology of this space. 

To apply equation~\eqref{eq:algor} for $n=7$ and $l=2$ we first need to find $E^3_{2,3}(q)$ and $E^5_{1,3}(q)$. We clearly have $E^3_{2,3}(q)=s^x_{(2)} s^y_{(1)}$. Lemma~\ref{lem:git2} tells us that $E^5_{0,1}(q)=E^5_{0,2}(q)=(q^2+q+1)s^y_{(5)}+qs^y_{(4,1)}$, and since $F^y_{1,2}=qs^y_{(1)}$ and $H^*(\Mmb^5_{1,2})={\rm Res}^{\Sg_5}_{\Sg_{1} \times \Sg_{4}} H^*(\Mmb^5_{0,1})$, applying equation~\eqref{eq:algor} for $n=5$ and $l=2$ gives 
$$E^5_{1,3}(q)=s^x_{(1)} \, \Bigl(\frac{\partial}{\partial p^y_1} E^5_{0,2}(q)\Bigr)-s^x_{(1)}s^y_{(1)}(qs^y_{(1)} \circ s^y_{(3)}) = s^x_{(1)}s^y_{(4)}(q^2+q+1)~.$$
Again, Lemma~\ref{lem:git2} tells us that 
$$E^7_{0,3}(q)=(q^4+q^3+2 q^2+q+1)s_{(7)}+(q^3+q^2+q)s_{(6,1)}+q^2s_{(5,2)}~,$$
and since $F^y_{2,2}=q^2s^y_{(2)}$, applying equation~\eqref{eq:algor} for $n=7$ and $l=2$ gives
\begin{multline*}
  E^7_{0,1}(q)=E^7_{0,2}(q)=E^7_{0,3}(q)+s_{(4)}(q^2+q+1)\, qs_{(3)} 
+ s_{(1)} \, q^2(s_{(2)} \circ s_{(3)})
\\
=s_{(7)}(q^4+2q^3+4q^2+2q+1)+s_{(6,1)}(2q^3+3q^2+2q)+s_{(5,2)}(q^3+3q^2+q)+\\+s_{(4,3)}(q^3+2q^2+q)+s_{(4,2,1)}q^2~.
\end{multline*}

We now turn to $n=8$, which is the first interesting even case. To apply equation~\eqref{eq:algor} for $n=8$ and $l=2$ we need to find $E^4_{2,3}(q)$ and $E^6_{1,3}(q)$. The map $\Mmb^4_{2,2} \rightarrow \Mmb^4_{2,3}$ is an isomorphism, $H^*(\Mmb^4_{2,2})={\rm Res}^{\Sg_4}_{\Sg_{2} \times \Sg_{2}} H^*(\Mmb^4_{0,1})$ and so $E^4_{2,3}(q)=(q+1)s^x_{(2)}s^y_{(2)}$. Then it follows from Lemma~\ref{lem:git3} that 
$$E^6_{0,1}(q)=E^6_{0,2}(q)=(q^3+2q^2+2q+1) s^y_{(6)}+(q^2+q)s^y_{(5,1)}+(q^2+q)s^y_{(4,2)}$$
and applying equation~\eqref{eq:algor} for $n=6$ and $l=2$ we get 
\begin{multline*}
E^6_{1,3}(q)=s^x_{(1)} \, \Bigl(\frac{\partial}{\partial p^y_1} E^6_{0,2}(q)\Bigr)-(q+1)s^x_{(1)}s^y_{(2)}\, qs^y_{(3)}=\\=s^x_{(1)}\bigl( (q^3+2q^2+2q+1) s^y_{(5)}+(q^2+q)s^y_{(4,1)} \bigr).
\end{multline*}
Again we use Lemma~\ref{lem:git3} to see that 
\begin{multline*}
E^8_{0,3}(q)=(q^5+2q^4+3q^3+3q^2+2q+1)s_{(8)}+(q^4+2q^3+2q^2+q)s_{(7,1)}+(q^4+2q^3+2q^2+q)s_{(6,2)}+\\+(q^3+q^2)s_{(5,3)}+(q^4+q^3+q^2+q)s_{(4,4)}.
\end{multline*}
We then apply equation~\eqref{eq:algor} for $n=8$ and $l=2$, which gives 
\begin{multline*}
E^8_{0,1}(q)=E^8_{0,2}(q)=E^8_{0,3}(q)+ \Bigl(\frac{\partial}{\partial p^x_1} E^6_{1,3}(q)\Bigr) qs^y_{(3)}+(q+1)s^y_{(2)} \,q^2(s^y_{(2)} \circ s^y_{(3)})=\\
=(q^5+3q^4+6q^3+6q^2+3q+1)s_{(8)}+(2q^4+6q^3+6q^2+2q)s_{(7,1)}+(2q^4+7q^3+7q^2+2q)s_{(6,2)}+\\
+(q^3+q^2)s_{(6,1,1)}+(q^4+5q^3+5q^2+q)s_{(5,3)}+(2q^3+2q^2)s_{(5,2,1)}+\\
+(q^4+3q^3+3q^2+q)s_{(4,4)}+(2q^3+2q^2)s_{(4,3,1)}+(q^3+q^2)s_{(4,2,2)}.
\end{multline*}

\section{Lengths} 
\label{sec:length}
The length of an irreducible representation of $\Sg_n$ is defined to be the length of the corresponding partition, this is written $l(V_{\lambda})=l(\lambda)$ in the notation of Appendix~\ref{sec:character}. We then define the length $l(V)$ of a representation $V$ of $\Sg_n$ to be the maximum length of all of its irreducible constituents. 

It is shown in~\cite[Theorem 4.2]{Faber-Pandharipande} that
\begin{equation}
  \label{eq:maxlength}
  l\bigl(H^{2i}(\Mmb_{0,n})\bigr) \leq {\rm min} (i+1,n-i-2), 
\end{equation}
and this fact is used to prove similar bounds for higher genera. These bounds are in turn used to detect non-tautological classes in the cohomology of $\Mmb_{g, n}$ for higher genera. Here we use Theorem~\ref{thm:ind} to show a result that implies that this bound is indeed sharp, compare the discussion in \cite[Section 4.4]{Faber-Pandharipande}. 

We will say that a partition $\lambda$ has property $\star_{n,i}$, if $\lambda'_1=\lambda_2'={\rm min} (i+1,n-i-2)$ and $\lambda'_3\geq 1$. 
In the following, we use the notation of Appendix~\ref{sec:order}. 
\begin{theorem}\label{thm:length} 
Let us put $w_{n,i}:=w\bigl(\mathrm{ch}_n\left(H^{2i}(\Mmb_{0,n})\right)\bigr)$ and $A_n:=\{\frac{n-4}{2},\frac{n-2}{2}\}$ if $n$ even and $A_n:=\{\frac{n-3}{2}\}$ if $n$ odd.   
\begin{itemize}
\item[{\bf (i)}] If $i \notin A_n$ then $w_{n,i}$ fulfills property $\star_{n,i}$. 
\item[{\bf (ii)}] If $i \in A_n$ then $w_{n,i}=\lambda_{n,i}$ where $\lambda_{n,i}:=\begin{cases}(4,2^{(n-4)/2})& \text{if $n$ even} \\ (4,2^{(n-5)/2},1) \quad & \text{if $n$ odd.}
\end{cases}$ 
\end{itemize}
Moreover, in the case (ii), the irreducible representation corresponding to $\lambda_{n,i}$ appears in $H^{2i}(\Mmb_{0,n})$ with multiplicity one. 
\end{theorem}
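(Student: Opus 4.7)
The approach is induction on $n$, using Theorem \ref{thm:ind} as the inductive engine. For the inductive step, I would iterate \eqref{eq:ind} along the chain $\Mmb^n_{0,1}\to\Mmb^n_{0,2}\to\cdots\to\Mmb^n_{0,r(n,0)}$ to obtain
\begin{equation*}
H^*(\Mmb_{0,n}) \;=\; H^*(\Mmb^n_{0,r(n,0)}) \;\oplus\; \bigoplus_{l,m} \mathrm{Ind}^{\Sg_n}_{G^n_{0,m,l+1}}\Bigl(W^n_{0,m,l+1}\otimes (H^+(\Pb^{l-1}))^{\otimes m}\Bigr),
\end{equation*}
in which each $W^n_{0,m,l+1}$ is built from $H^*(\Mmb_{0,n'})$ with $n'=n-lm<n$, so the inductive hypothesis applies. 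The base term $H^*(\Mmb^n_{0,r(n,0)})$ is handled by Lemmas \ref{lem:git2} and \ref{lem:git3}, whose explicit forms only involve Schur functions indexed by short partitions plus, in the even case, a plethysm $s_{(2)}\circ s_{(m)}$. Small $n$ can be checked against the computations of Section \ref{sec:algorithm}.

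The essential step is then to propagate the function $w(-)$, in the partial order on partitions of Appendix \ref{sec:order}, through the operations appearing on the right-hand side. Plethysm $(-)\circ s^y_{(l+1)}$ replaces each part $\nu_j$ of a partition by $l+1$ copies of $\nu_j$, while the length formula of Appendix \ref{sec:order} determines $w$ of an induced representation from the constituents of its argument. Combined with the inductively known form of $w(H^{2i'}(\Mmb_{0,n-lm}))$, these rules give an explicit leading partition for each $(l,m)$-summand in terms of the leading partitions attached to smaller $n$.

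For part (i), $i\notin A_n$, I would show that one distinguished summand --- typically $(l,m)=(1,1)$ applied to a constituent of $H^{2(i-1)}(\Mmb_{0,n-1})$ with property $\star_{n-1,i-1}$ --- produces a partition with property $\star_{n,i}$ of length $\min(i+1,n-i-2)$, while all other summands (including the base term) produce strictly smaller partitions in the chosen order. For part (ii), the top partition reaches the Faber--Pandharipande bound on length, and the combinatorial rules collapse the top contributions onto the single partition $\lambda_{n,i}$; to establish the multiplicity-one claim one has to tally all the ways in which $V_{\lambda_{n,i}}$ appears among the summands and verify that the total equals exactly one copy.

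The main obstacle is precisely this multiplicity-one count for $i\in A_n$: several distinct summands can a priori contribute copies of $V_{\lambda_{n,i}}$, and the necessary exact cancellations must be tracked through the iterated recursion, using the length formula of Appendix \ref{sec:order} together with the explicit GIT-level computations from Lemmas \ref{lem:git2} and \ref{lem:git3}. The length-dominance assertion in (i) is comparatively routine once the propagation rules for $w(-)$ are in place, since it reduces to a comparison of partitions of known shape among finitely many candidate summands.
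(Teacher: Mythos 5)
Your overall architecture (induction on $n$ through Theorem~\ref{thm:ind}, propagation of $w(-)$ through induction products and plethysms, GIT base cases from Lemmas~\ref{lem:git2} and~\ref{lem:git3}, multiplicity one as the hard point) matches the paper's, but several of your concrete steps fail. First, your plethysm rule is wrong: $(-)\circ s_{(l+1)}$ does not ``replace each part $\nu_j$ by $l+1$ copies of $\nu_j$''. The correct leading term, which is exactly what Proposition~\ref{prop:plet} supplies, is $w(s_{\mu}\circ s_{(m)})=((m-1)^{|\mu|})+\mu$ for $m$ odd and $((m-1)^{|\mu|})+\mu'$ for $m$ even; e.g.\ $w(s_{(2)}\circ s_{(3)})=(4,2)$, not $(2,2,2)$. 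With your rule every leading partition computed downstream is incorrect, so both the exhibition of a $\star_{n,i}$-constituent in (i) and the identification of $\lambda_{n,i}$ in (ii) collapse. Second, the distinguished summand $(l,m)=(1,1)$ you invoke for part (i) does not exist: the reduction $\Mmb^n_{0,1}\to\Mmb^n_{0,2}$ is an isomorphism, so the first nontrivial step is $l=2$, whose $m=1$ exceptional summand involves $H^{2(i-1)}(\Mmb^{n-2}_{1,3})$ --- the induction drops $n$ by $2$, not by $1$, and the space is a weighted moduli space rather than $\Mmb_{0,n-1}$, so one must additionally decide whether the relevant class of $H^*(\Mmb^{n-2}_{1,2})={\rm Res}\,H^*(\Mmb_{0,n-2})$ survives the further reduction to $\Mmb^{n-2}_{1,3}$ (the paper's case split). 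Also, for (i) you need not dominate ``all other summands'': since odd cohomology vanishes there is no cancellation, so exhibiting one constituent satisfying $\star_{n,i}$ together with the bound~\eqref{eq:maxlength} already pins down $w_{n,i}$.

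The more serious gap is that you name the multiplicity-one count in (ii) as the obstacle but give no mechanism for it. The paper's resolution is a specific consumption argument: the unique class $s^x_{(1)}s^y_{\nu}$ in $H^{2i-2}(\Mmb^{n-2}_{1,2})$ that could feed a second copy of $V_{\lambda_{n,i}}$ through the $m=1$ summand is shown \emph{not} to survive to $H^{2i-2}(\Mmb^{n-2}_{1,3})$, because it is exactly accounted for by the exceptional contribution from $H^{2i-4}(\Mmb^{n-4}_{2,3})$; only the $m=2$ summand then produces $\lambda_{n,i}=((2^2)+(2))\cup\mu$, once, while $m\geq 3$ is excluded by a column-count inequality. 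On top of that one must still prove $w(H^{2i}(\Mmb^n_{0,3}))<\lambda_{n,i}$, which the paper does by a separate descending induction along $\Mmb^n_{0,3}\to\cdots\to\Mmb^n_{0,r(n,0)}$, starting from $l(H^*(\Mmb^n_{0,r(n,0)}))\leq 2$ and ruling out all intermediate exceptional contributions except one borderline case ($l=4$, $k=1$, $n$ even), which is then excluded by hand. None of this is recoverable from ``tally all the ways and verify the total is one'' without these two arguments.
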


\begin{proof}
Note first that by equation \eqref{eq:maxlength} we know that if $w_{n,i}=\lambda$, then $\lambda'_1$ and $\lambda_2'$ are at most equal to ${\rm min} (i+1,n-i-2)$. 

(i)  It is clear that $w_{n,0}=(n)$ for every $n \geq 3$. Assume that $0<i<n/2-2$. We will use induction over $n$, where the base cases are covered by the statement for $w_{n,0}$. Since $H^{2i-2}(\Mmb^{n-2}_{1,2})={\rm Res}^{\Sg_{n-2}}_{\Sg_{1} \times \Sg_{n-3}}H^{2i-2}(\Mmb^{n-2}_{0,2})$, we know by induction that this cohomology group will contain a representation with character $s^x_{(1)}s^y_{\mu}$, where $\mu'_1=\mu'_2=i$. Consider the morphism $\Mmb^{n-2}_{1, 2} \rightarrow  \Mmb^{n-2}_{1, 3}$. From Theorem~\ref{thm:ind} it follows that either $s^x_{(1)}s^y_{\mu}$ will also be found in $H^{2i-2}(\Mmb^{n-2}_{1,3})$, or that there is an $m$ such that the degree $2i-2-2m$ part of $W^{n-2}_{1,m,3}$ will contain a representation with character $s^x_{(1)} s^y_{\lambda} s^z_{\nu}$ (the notation is the natural extension of the character map for $\Sg_k\times \Sg_m \times \Sg_{n-k-m(l+1)}$, compare Appendix~\ref{sec:character}), where $\nu'_1=\nu'_2=i-m$.

In the first case, we apply Theorem~\ref{thm:ind} to the morphism $\Mmb^{n}_{0, 2} \rightarrow  \Mmb^{n}_{0, 3}$ and Proposition~\ref{prop:plet} and Lemma~\ref{lem:prodw} then show that there is a contribution of the form $s^y_{(3) \cup \mu}$ in $H^{2i}(\Mmb^{n}_{0, 2})$, and thus $w_{n,i}$ fulfills property $\star_{n,i}$. 

The second case is analogous. We apply Theorem~\ref{thm:ind} to $\Mmb^{n}_{0, 2} \rightarrow  \Mmb^{n}_{0, 3}$. Proposition~\ref{prop:plet} and Lemma~\ref{lem:prodw} then tell us that $H^{2i}(\Mmb^{n}_{0,2})$ will contain a representation with character $s^y_{((2^{m+1})+\lambda') \cup \nu}$ for some partition $\lambda'$ of $m+1$. We conclude also in this case that $w_{n,i}$ fulfills $\star_{n,i}$. 
By Poincar\'e duality the statement holds for all $i \notin A_n$.

(ii) Assume that $i \in A_n$. We will use induction on $n$ to show that $w_{n,i}=\lambda_{n,i}$ and that the contribution with this character appears with multiplicity one. The base cases $w_{4,i}=(4)$, $w_{5,i}=(4,1)$, $w_{6,i}=(4,2)$ and $w_{7,i}=(4,2,1)$ are readily computed. Note that Proposition~\ref{prop:plet} and Lemma~\ref{lem:prodw} will repeatedly be used below without mention. 

If we apply Theorem~\ref{thm:ind} to the morphism $\Mmb^{n}_{0, 2} \rightarrow  \Mmb^{n}_{0, 3}$ we see that a representation with character $s^x_{\mu}s^y_{\nu}$ in $H^{2i-2m}(\Mmb^{n-2m}_{m,3})$ will contribute a representation $V$ to $H^{2i}(\Mmb^{n}_{0,2})$ with $w(\ch_n(V))=((2^m)+\mu) \cup \nu$. In order for this partition to be greater than or equal to $\lambda_{n,i}$ we must have $\nu'_1+\nu'_2 \geq n-2-2m$. Since $\nu$ is a partition of $n-3m$, this is only possible if $m \leq 2$. 

By induction, $H^{2i-4}(\Mmb^{n-4}_{2,2})={\rm Res}^{\Sg_{n-4}}_{\Sg_{2} \times \Sg_{n-6}}H^{2i-4}(\Mmb^{n-4}_{0,2})$ will contain a representation with character $s^x_{(2)}s^y_{\mu}$ and of multiplicity one, where $\mu=(2^{(n-6)/2})$ if $n$ even and $\mu=(2^{(n-7)/2},1)$ if $n$ odd. Considering the map $\Mmb^{n-4}_{2,2} \rightarrow \Mmb^{n-4}_{2,3}$ together with Theorem~\ref{thm:ind}, the argument above shows that this class will also appear in $H^{2i-4}(\Mmb^{n-4}_{2,3})$. 

Applying Theorem~\ref{thm:ind} to $\Mmb^{n}_{0, 2} \rightarrow  \Mmb^{n}_{0, 3}$ now gives us that $w_{n,i} \geq ((2^2)+(2)) \cup \mu=\lambda_{n,i}$. This will then also be the greatest contribution coming from $H^{2i-4}(\Mmb^{n-4}_{2,3})$. 

We are left to show that $w_{n,i} \leq \lambda_{n,i}$, and that the class found above is the only of its kind. We have already considered contributions from $H^{2i-2m}(\Mmb^{n-2m}_{m,3})$ for $m \geq 2$. By induction, $s^x_{(1)}s^y_{\nu}$, where $\nu=(3,2^{(n-6)/2})$ if $n$ even and $\nu=(3,2^{(n-7)/2},1)$ if $n$ odd, will appear in $H^{2i-2}(\Mmb^{n-2}_{1, 2})$ with multiplicity one. The class $s^x_{(2)}s^y_{\mu}$ in $H^{2i-4}(\Mmb^{n-4}_{2,3})$, found above, will then ensure that $s^x_{(1)}s^y_{\nu}$ will not appear in $H^{2i-2}(\Mmb^{n-2}_{1, 3})$. This deals with the case $m=1$.

To finish the proof we need to see that $w(H^{2i}(\Mmb^n_{0,3})) < \lambda_{n,i}$ for $n \geq 7$. 
We show this inductively from the bottom of the sequence $\Mmb^{n}_{0, 3} \rightarrow  \Mmb^{n}_{0, 4} \rightarrow \cdots \rightarrow \Mmb^{n}_{0, r(n,0)}$.
Lemma~\ref{lem:git2} and Lemma~\ref{lem:git3} show that $l(H^{2i}(\Mmb^n_{0,r(n,0)})) \leq 2$ and hence prove the base case.
We then show that $w(H^{2i}(\Mmb^n_{0,l})) < \lambda_{n,i}$ implies $w(H^{2i}(\Mmb^n_{0,l-1})) < \lambda_{n,i}$ for $4\leq l \leq r(n,0)$.
If $w(H^{2i}(\Mmb^n_{0,l-1}))=\lambda_{n,i}$ then, in view of Theorem~\ref{thm:ind} applied to the morphism $\Mmb^{n}_{0, l-1} \rightarrow  \Mmb^{n}_{0, l}$, 
we need to have a class with representation $s^x_{\mu}s^y_{\nu}$ in $H^{2j}(\Mmb^{n-(l-1)k}_{k,l})\otimes H^+(\mathbb{P}^{l-2})^{\otimes k}$ 
such that $w(s_{\mu}\cdot s_{\nu})=\lambda_{n,i}$ with
$\nu'_1=\lfloor(n-1)/2\rfloor-k$ for some $k >0$ and $j\in A_{n-(l-1)k}$. 
Such a class can exist only when $l=4$, $k=1$ and $n$ is even, because we have 
$l(H^{2j}(\Mmb^{n-(l-1)k}_{0,2})) \leq \lfloor(n-(l-1)k-1)/2\rfloor$  on the one hand by equation~\eqref{eq:maxlength}, 
and $l(H^{2j}(\Mmb^{n-(l-1)k}_{0,2})) \geq \nu'_1$ on the other by the sequence $\Mmb^{n-(l-1)k}_{k, 2} \rightarrow \cdots  \rightarrow \Mmb^{n-(l-1)k}_{k, l}$. 
So we are left with the case $k=1$ in $\Mmb^{n}_{0, 3} \rightarrow  \Mmb^{n}_{0, 4}$ for even $n$. But for any class $s^x_{(1)}s^y_{\nu}$ in $H^{n-6}(\Mmb^{n-3}_{1,4})$
with $\nu_1'=\frac{n-4}{2}$, we have $w(s_{(1)}\cdot s_{\nu})< \lambda_{n,i}=(4, 2^{\frac{n-4}{2}})$. This completes the proof. 
\end{proof}

\begin{corollary} For any $n \geq 3$ and $0\leq i \leq (n-3)/2$ we have
$$l\bigl(H^{2i}(\Mmb_{0,n})\bigr) = {\rm min} (i+1,n-i-2).$$
\end{corollary}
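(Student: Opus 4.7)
The plan is to obtain the corollary as an almost immediate consequence of Theorem~\ref{thm:length} combined with the Faber--Pandharipande upper bound~\eqref{eq:maxlength}. First, note that for $0 \le i \le (n-3)/2$ one has $i+1 \le (n-1)/2 \le n-i-2$, so $\min(i+1, n-i-2) = i+1$, and therefore \eqref{eq:maxlength} directly yields $l\bigl(H^{2i}(\Mmb_{0,n})\bigr) \le i+1$.

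For the matching lower bound I would use that, by the definition of $w$ in Appendix~\ref{sec:order}, the partition $w_{n,i}$ indexes an irreducible $\Sg_n$-constituent that actually appears in $H^{2i}(\Mmb_{0,n})$; hence it suffices to check that $l(w_{n,i}) = i+1$ in both cases of Theorem~\ref{thm:length}. In case~(i), property $\star_{n,i}$ asserts that the first column of $w_{n,i}$ has length $\min(i+1, n-i-2) = i+1$, which is precisely the number of parts of $w_{n,i}$. In case~(ii), a direct count of parts shows that $\lambda_{n,i} = (4, 2^{(n-4)/2})$ has $(n-2)/2$ parts when $n$ is even, and $\lambda_{n,i} = (4, 2^{(n-5)/2}, 1)$ has $(n-1)/2$ parts when $n$ is odd; both values equal $i+1$ for the unique $i \in A_n$ lying in the corollary's range (namely $i = (n-4)/2$ and $i = (n-3)/2$ respectively).

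Combining the two bounds yields the claimed equality. Since all the substantive work sits inside Theorem~\ref{thm:length}, no genuine obstacle remains at this stage; the corollary is essentially a bookkeeping exercise that extracts the length statistic from the explicit description of $w_{n,i}$ provided by the theorem, coupled with the elementary observation that $\min(i+1, n-i-2)$ collapses to $i+1$ throughout the stated range.
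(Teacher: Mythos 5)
Your proposal is correct and is exactly the argument the paper intends (the corollary is stated without proof as an immediate consequence of Theorem~\ref{thm:length}): the upper bound is \eqref{eq:maxlength}, and the lower bound follows because $w_{n,i}$ is by definition a partition actually occurring in $\ch_n\bigl(H^{2i}(\Mmb_{0,n})\bigr)$ and your count of its parts, $(w_{n,i})'_1=i+1$, is right in both cases of the theorem. Nothing further is needed.
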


\appendix
\section{Cohomology of the blow-up} \label{sec:cohomology}
First, we recall the fact that the cohomology of the blow-up $\widetilde{M}$ of 
a smooth projective variety $M$ along a smooth subvariety $Z$ 
of codimension $l$ is given by
\begin{equation}\label{eq:general}
H^k(\widetilde{M})\cong 
H^k(M)\oplus \bigoplus_{i=1}^{l-1} \left( H^{k-2i}(Z)\otimes H^{2i}(\mathbb{P}^{l-1})\right)~,
\end{equation}
see e.g. \cite[Theorem 7.31]{Voisin}. 

Let $X$ be a smooth projective variety and $Y=Y_1 \cup \cdots \cup Y_n$ 
be the union of smooth subvarieties of $X$. Let $\widetilde{X}$ be the blow-up of $X$ 
along $Y$. We assume that any non-empty intersection of irreducible 
components of $Y$ is transversal. Then it is known that
$\widetilde{X}$ is obtained by a sequence of smooth 
blow-ups along the proper transforms of the irreducible 
components of $Y$ in any order, see e.g. \cite[Proposition 2.10]{KiemMoon}.
We further assume that $\mathrm{codim}_X Y_i=l$ for any $i$.  
The aim of this appendix is to give a formula for $H^*(\widetilde{X})$. 
For a subset $I$ of $\{1, \ldots, n\}$, we set $Y_I:=\cap_{i\in I} Y_i$,
which is a smooth subvariety of $X$ by the assumption of transversality.
\begin{proposition}\label{prop:cohomology}
Under the above assumptions, we have  
\begin{equation}\label{eq:coh-blow-up}
H^*(\widetilde{X}) \cong H^*(X) \oplus
\bigoplus_{I \subset \{1,\ldots,n\}}
\left(
H^*(Y_I) \otimes 
\left(H^+(\mathbb{P}^{l-1})
\right)^{\otimes |I|}\right)~,
\end{equation}
where $I$ runs over all subsets of $\{1,\ldots, n\}$ 
for which $Y_I \neq \emptyset$. 
\end{proposition}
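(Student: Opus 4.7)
The plan is to proceed by induction on $n$, the number of irreducible components of $Y$. The base case $n = 1$ is formula \eqref{eq:general} applied to $Z = Y_1$, since then the only non-empty index set is $I = \{1\}$.

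For the inductive step, I would use the fact (cited from \cite[Proposition 2.10]{KiemMoon} in the preamble to the proposition) that $\widetilde{X}$ may be obtained by first blowing up $Y_1$ and then blowing up, in any order, the proper transforms of $Y_2, \ldots, Y_n$. Setting $X_1 := \mathrm{Bl}_{Y_1} X$, formula \eqref{eq:general} directly yields
\begin{equation*}
H^*(X_1) \cong H^*(X) \oplus \bigl(H^*(Y_1) \otimes H^+(\mathbb{P}^{l-1})\bigr).
\end{equation*}
The transversality hypothesis is what makes the induction work: for any non-empty $I' \subset \{2, \ldots, n\}$ with $Y_{I'} \neq \emptyset$, the intersection $\bigcap_{i \in I'} \widetilde{Y}_i$ of the corresponding proper transforms in $X_1$ equals the proper transform of $Y_{I'}$, is smooth of codimension $l$ in $X_1$, is canonically isomorphic to $\mathrm{Bl}_{Y_{I' \cup \{1\}}} Y_{I'}$, and these proper transforms meet transversally. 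Thus the inductive hypothesis applies to $(X_1, \widetilde{Y}_2, \ldots, \widetilde{Y}_n)$.

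Combining the resulting expression for $H^*(\widetilde{X})$ indexed by subsets $I' \subset \{2, \ldots, n\}$ with one further application of \eqref{eq:general} to each $\mathrm{Bl}_{Y_{I' \cup \{1\}}} Y_{I'}$ splits each contribution in two: one piece supported on $Y_{I'}$ and one on $Y_{I' \cup \{1\}}$. Reindexing the second family by $I := I' \cup \{1\}$ absorbs the extra factor of $H^+(\mathbb{P}^{l-1})$ into $(H^+(\mathbb{P}^{l-1}))^{\otimes |I|}$, and all summands together organize into the $H^*(X)$ piece plus the pieces indexed by (i) $I = \{1\}$, (ii) non-empty $I \subset \{2, \ldots, n\}$, and (iii) $I \ni 1$ with $|I| \geq 2$. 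These three families exhaust exactly the non-empty $I \subset \{1, \ldots, n\}$ with $Y_I \neq \emptyset$, giving \eqref{eq:coh-blow-up}.

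The main obstacle is the geometric bookkeeping underlying the inductive step: one has to verify that blowing up $Y_1$ genuinely preserves the hypotheses of the proposition, namely smoothness of the proper transforms $\widetilde{Y}_i$, their common codimension $l$, transversality of all their multi-intersections, and the identification $\bigcap_{i \in I'} \widetilde{Y}_i \cong \mathrm{Bl}_{Y_{I' \cup \{1\}}} Y_{I'}$. These are standard consequences of transversality of smooth subvarieties, but they need to be recorded explicitly since both \eqref{eq:general} and the inductive hypothesis are applied to the configuration they produce.
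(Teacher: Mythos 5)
Your proposal is correct and follows essentially the same route as the paper: both factor $\widetilde{X}\to X$ through successive blow-ups of the (proper transforms of the) components, apply the standard blow-up formula \eqref{eq:general} at each stage, and use transversality to identify proper transforms of the intersections $Y_{I}$ as blow-ups of the original $Y_I$. The paper organizes this as an explicit $n$-step sequence plus the auxiliary Lemma~\ref{lem:proper} computing $H^*(Y_{i,i-1})$, whereas you package the same computation as an induction on the number of components after the first blow-up; the geometric facts you flag as needing verification are exactly those the paper also invokes without detailed proof.
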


\begin{proof}
Let 
$$
X_{n} \overset{\pi_n}{\longrightarrow} X_{n-1} 
\rightarrow
\cdots 
\rightarrow
X_1 
\overset{\pi_1}{\longrightarrow}
X_0
$$
be the sequence of blow-ups which we define inductively as follows. \\
(i) Let $\pi_1: X_1\to X_0:=X$ be the blow-up along $Y_1$.\\
(ii) For $i\geq 2$, we put $\pi_{i}: X_{i} \to X_{i-1}$ to be the blow-up
along the proper transform of $Y_i$. 

Note that $X_n$ is isomorphic to $\widetilde{X}$.
For $1\leq j \leq i-1$, we denote by
$Y_{i,j}$ the proper transform of $Y_i$ under
$\pi_{j}\circ \cdots \circ \pi_1: X_{j} \to X_{0}$.
In this notation, $\pi_i: X_i\to X_{i-1}$ is the blow-up along ${Y}_{i, i-1}$. 
Then it follows from \eqref{eq:general} that
\begin{equation}\label{eq:inductive}
H^*(X_i) \cong H^*(X_{i-1}) \oplus 
\left(H^*({Y}_{i,i-1}) \otimes H^+(\mathbb{P}^{l-1}) \right),
\end{equation}
as graded vector spaces.
The equation \eqref{eq:inductive} 
together with \eqref{eq:proper} given 
in Lemma \ref{lem:proper} below
implies \eqref{eq:coh-blow-up}. 
\end{proof}

\begin{lemma}\label{lem:proper}
Under the same assumptions and notation as above, we have 
\begin{equation}\label{eq:proper}
H^*({Y}_{i,i-1})\cong
H^*(Y_i) \oplus
\bigoplus_{I\subset \{1,\ldots,i-1\}} 
\left(
H^*(Y_i \cap Y_I)\otimes \left(H^+ (\mathbb{P}^{l-1})
\right)^{\otimes |I|}
\right),
\end{equation}
where $I$ runs over all the subsets of $\{1,\ldots, i-1\}$
for which $Y_i\cap Y_I \neq \emptyset$.
\end{lemma}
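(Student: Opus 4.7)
The plan is to realize $Y_{i,i-1}$ as itself the blow-up of $Y_i$ along a transversal union of smooth subvarieties of $Y_i$ of codimension $l$, and then to deduce \eqref{eq:proper} by applying Proposition~\ref{prop:cohomology} — with strictly fewer components — to this configuration inside $Y_i$.

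The geometric input I would isolate first is the following sublemma: for every $j$ with $1\le j\le i-1$, the restriction $\pi_j\vert_{Y_{i,j}}\colon Y_{i,j}\to Y_{i,j-1}$ is the blow-up of $Y_{i,j-1}$ along $Y_{i,j-1}\cap Y_{j,j-1}$. This follows from the standard fact that if two smooth subvarieties meet transversally, the proper transform of one under the blow-up along the other is canonically the blow-up of the first along their intersection. The transversality one needs at stage $j$ is that $Y_{i,j-1}$ meets $Y_{j,j-1}$ transversally in $X_{j-1}$; this propagates from the hypothesis on $Y\subset X$ because blowing up along a subvariety transverse to two other transverse subvarieties preserves the transversality of their proper transforms, and the intersection $Y_{i,j-1}\cap Y_{j,j-1}$ is exactly the proper transform of $Y_i\cap Y_j$.

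Iterating this sublemma, $Y_{i,i-1}$ is obtained from $Y_i$ by successively blowing up the proper transforms (inside $Y_i$) of the non-empty intersections $Y_i\cap Y_j$ for $j=1,\dots,i-1$, which — by the argument of \cite[Proposition 2.10]{KiemMoon} quoted in the text — is the same as blowing up $Y_i$ along the union $\bigcup_{j<i} (Y_i\cap Y_j)$. By the transversality hypothesis on the components of $Y$, each non-empty $Y_i\cap Y_j$ is a smooth subvariety of $Y_i$ of codimension $l$, and for any $I\subset\{1,\dots,i-1\}$ the multi-intersection $(Y_i\cap Y_{j_1})\cap\cdots\cap (Y_i\cap Y_{j_k})=Y_i\cap Y_I$ is transversal in $Y_i$ (when non-empty). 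Thus the pair $\bigl(Y_i,\,\{Y_i\cap Y_j\}_{j<i}\bigr)$ satisfies the hypotheses of Proposition~\ref{prop:cohomology}, and applying that proposition gives formula \eqref{eq:proper}, since $(Y_i\cap Y_j)_I=Y_i\cap Y_I$.

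To avoid circularity, I would organize everything as a single induction on $n$, the number of components of $Y$: Proposition~\ref{prop:cohomology} for $n$ components is deduced from \eqref{eq:inductive} and from Lemma~\ref{lem:proper} applied to each $Y_{i,i-1}$ with $i\le n$; Lemma~\ref{lem:proper} is in turn reduced, as above, to Proposition~\ref{prop:cohomology} for $Y_i$ equipped with at most $i-1\le n-1$ subvarieties, which is covered by the inductive hypothesis. The main obstacle is the sublemma on proper transforms and the propagation of transversality through the sequence of blow-ups; once that is in hand, the cohomology identification is purely formal and the combinatorial bookkeeping matches \eqref{eq:proper} directly.
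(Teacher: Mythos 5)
Your proposal is correct and follows essentially the same route as the paper: both realize $Y_{i,i-1}$ as an iterated blow-up of $Y_i$ whose centers at stage $j$ are the transversal intersections $Y_{i,j-1}\cap Y_{j,j-1}$ (the proper transforms of the $Y_i\cap Y_j$), and then apply the blow-up formula \eqref{eq:general} recursively. The only difference is organizational: the paper unwinds the recursion by hand via the intermediate formula \eqref{eq:proper2}, whereas you invoke Proposition~\ref{prop:cohomology} for the smaller configuration $\bigl(Y_i,\{Y_i\cap Y_j\}_{j<i}\bigr)$ and make the resulting mutual induction on the number of components explicit, which the paper leaves implicit.
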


\begin{proof}
Note that the proper transform $Y_{i,i-1}$ of $Y_i$ is obtained by 
the sequence of blow-ups
$$
Y_{i,i-1}\overset{\pi_{i-1}}{\longrightarrow} Y_{i,i-2}\to \cdots \to Y_{i,1}
\overset{\pi_1}{\longrightarrow} Y_{i,0}:=Y_i~,
$$
and that the center of the blow-up $\pi_j: Y_{i,j}\to Y_{i,j-1}$
is $Y_{i,j-1}\cap Y_{j,j-1}$. In general, for  $I\subset \{1,\ldots, n\}$
and $j<\min \{i\in I\}$ such that $Y_{I,j}:=\cap_{i\in I}Y_{i,j}\neq \emptyset$, 
$Y_{I,j}$ is the blow-up of $Y_{I,j-1}$ along $Y_{I,j-1}\cap Y_{j,j-1}$.
Using this structure and \eqref{eq:general} recursively, we obtain 
\begin{equation}\label{eq:proper2}
H^*({Y}_{i,j})\cong
H^*(Y_i) \oplus
\bigoplus_{I\subset \{1,\ldots,j\}} 
\left(
H^*(Y_i \cap Y_I)\otimes \left(H^+(\mathbb{P}^{l-1})
\right)^{\otimes |I|}
\right),
\end{equation}
where $I$ runs over all the subsets of $\{1,\ldots, j\}$
for which $Y_I\cap Y_i \neq \emptyset$.
The desired formula \eqref{eq:proper} is the case $j=i-1$ in \eqref{eq:proper2}. 
\end{proof}

\section{Characters of representations of symmetric groups} \label{sec:character}
Let $\Sg_n$ be the symmetric group on $n$ letters.
Let $\Lambda:=\underset{\longleftarrow}{\lim}~\mathbb{Z}[x_1, \ldots, x_n]^{\Sg_n}$ 
be the ring of symmetric functions. 
It is well known that $\Lambda \otimes \mathbb{Q}=\mathbb{Q}[p_1, p_2, \ldots]$ where $p_n$ are the power sums. We denote by $\Par(n)$ the set of partitions of $n$, and for $\lambda=(\lambda_1,\ldots,\lambda_{l(\lambda)}) \in \Par(n)$ we set $p_{\lambda}:=\prod_{i=1}^{l(\lambda)} p_{\lambda_i}$. 
We also set $\Lambda^{x, y}:=\Lambda^x\otimes \Lambda^y$, where 
$\Lambda^x$ and $\Lambda^y$ are the ring of 
symmetric functions in $x=(x_1, x_2, \ldots)$ and $y=(y_1, y_2, \ldots)$
respectively.

For a representation $V$ of  $\Sg_n$, we define 
\begin{equation*}
\ch_n(V)
:=\frac{1}{n!} \sum_{w \in \Sg_n}
{\rm Tr}_V(w) p_{\rho(w)} \in \Lambda \, ,
\end{equation*}
where $\rho(w)\in \Par(n)$ is the cycle type of $w\in \Sg_n$. Similarly we define, for an $\Sg_k \times \Sg_{n-k}$ representation $V$, 
\begin{equation*}
\ch_{k, n-k}(V)
:=\frac{1}{k! }\frac{1}{(n-k)!} \sum_{(u,v) \in \Sg_k \times \Sg_{n-k}}
{\rm Tr}_V\bigl((u,v)\bigr) p_{\rho(u)}^x \, p_{\rho(v)}^y \in \Lambda^{x, y} \, ,
\end{equation*}
where $p_n^x$ and $p_n^y$ 
are the power sums in the variable $x$ and $y$ respectively.

If $V$ and $W$ are representations of $\Sg_n$ we put
$$\ch_{n}(V) * \ch_{n}(W):=\ch_{n}(V \otimes W).$$
For any $\lambda \in \Par(k)$, if we put $m_j(\lambda):=\#\{ i \mid \lambda_i =j\}$ and 
$$\frac{\partial}{\partial p^x_{\lambda}}:=\Bigl(\prod_{i=1}^{k}\frac{1}{m_i(\lambda)!} \Bigr) \frac{\partial}{\partial p^x_{\lambda_1}} \frac{\partial}{\partial p^x_{\lambda_2}} \cdots \frac{\partial}{\partial p^x_{\lambda_{l(\lambda)}}},$$
then 
$$\ch_{k,n-k}\bigl({\rm Res}^{\Sg_{n}}_{\Sg_{k} \times \Sg_{n-k}}(V)\bigr)=\sum_{\lambda \in \Par(n-k)} 
\Bigl( \frac{\partial}{\partial p^x_{\lambda}} \ch_{n}(V) \Bigr) \, p^y_{\lambda}.$$
If $V_i$ are representations of $\Sg_{n_i}$ for $1 \leq i \leq k$ then  
$$\ch_{\sum_{i=1}^k n_i} \Bigl({\rm Ind}_{\Sg_{n_1} \times \ldots \times \Sg_{n_k}}^{\Sg_{\sum_{i=1}^k n_i}} (V_1 \boxtimes V_2 \boxtimes \ldots \boxtimes V_k) \Bigr) = \prod_{i=1}^k \ch_{n_i}(V_i).$$

If $\circ$ denotes plethysm between symmetric functions, 
and $\sim$ denotes the wreath product, that is,  $\Sg_{n_1} \sim\,  \Sg_{n_2}:=\Sg_{n_1} \ltimes (\Sg_{n_2})^{n_1}$ where $\Sg_{n_1}$ acts on $(\Sg_{n_2})^{n_1}$ by permutation, then 
$$\ch_{n_1 n_2}\Bigl({\rm Ind}_{\Sg_{n_1} \sim \, \Sg_{n_2}}^{\Sg_{n_1 n_2}} (V_1 \boxtimes \underbrace{ V_2 \boxtimes \ldots \boxtimes V_2 }_{n_1} \,) \Bigr) = \ch_{n_1}(V_1) \circ \ch_{n_2}(V_2),
$$
see~\cite[Appendix A, p.~158]{Mac}. 

Recall finally that irreducible representations of $\Sg_n$ are indexed by $\Par(n)$. For $\lambda \in \Par(n)$, let $V_{\lambda}$ be the irreducible representation corresponding to 
$\lambda$ and define the Schur function
$$
s_{\lambda}:=\ch_n(V_{\lambda})\in \Lambda \, .
$$
It is well-known that $\{s_{\lambda}\}$,  
where $\lambda$ runs over all the partitions, 
is a $\mathbb{Z}$-basis of $\Lambda$.

\section{An ordering of symmetric functions} \label{sec:order} 
For any partition $\lambda=(\lambda_1,\lambda_2,\ldots,\lambda_{l(\lambda)})$ we denote its dual partition by $\lambda'=(\lambda'_1,\lambda'_2,\ldots)$, where $\lambda'_i:= |\{j:\lambda_j \geq i \}|$. If $\mu=(\mu_1,\mu_2,\ldots,\mu_{l(\mu)})$ is another partition we define the partitions $\lambda+\mu$ as $(\lambda_1+\mu_1,\lambda_2+\mu_2,\ldots)$ and the partition $\lambda \cup \mu$ as the reordering of $(\lambda_1,\ldots,\lambda_{l(\lambda)},\mu_1,\mu_2,\ldots,\mu_{l(\mu)})$. Note that $\lambda \cup \mu=(\lambda'+\mu')'$.

We introduce an ordering. For any partitions $\lambda$ and $\mu$ we say that $\lambda > \mu$ if there is a $k$ such that $\lambda'_i=\mu'_i$ for $1 \leq i \leq k-1$ and $\lambda'_k > \mu'_k$.

\begin{definition} For any symmetric function $f=\sum_{\lambda} a_{\lambda} s_{\lambda}$ we let $w(f)$ be the maximal partition $\lambda$ (w.r.t. $>$) such that $a_\lambda \neq 0$. 
\end{definition}

For any partition $\lambda$, we put $h_{\lambda}:=\prod_{i=1}^{l(\lambda)} s_{(\lambda_i)}$ and $e_{\lambda}:=\prod_{i=1}^{l(\lambda)} s_{(1^{\lambda_i})}$. The following is well known. 

\begin{lemma} \label{lem:rt}
There are integers $a_{\lambda,\mu}$ and $b_{\lambda,\mu}$ such that 
$$s_{\lambda}=h_{\lambda}+\sum_{\mu < \lambda} a_{\lambda,\mu}h_{\mu}=e_{\lambda'}+\sum_{\mu < \lambda'} b_{\lambda,\mu}e_{\mu}.$$
\end{lemma}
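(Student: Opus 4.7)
The plan is to deduce the lemma from the classical Kostka triangularity for the transition between Schur functions and $h$-products, together with the involution $\omega$ on $\Lambda$, the core step being a comparison between the paper's ordering and the dominance order.

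First I would establish the $h$-expansion. Recall that $h_\mu = \sum_\lambda K_{\lambda\mu}\, s_\lambda$, where the Kostka number $K_{\lambda\mu}$ vanishes unless $\lambda$ dominates $\mu$ (i.e., $\sum_{i=1}^k \lambda_i \ge \sum_{i=1}^k \mu_i$ for every $k$), and $K_{\lambda\lambda}=1$. Hence the transition matrix from $\{s_\lambda\}$ to $\{h_\mu\}$ is upper unitriangular over $\mathbb{Z}$ with respect to any linear extension of dominance, and so is its inverse. This yields integers $a_{\lambda,\mu}$ with
$$s_\lambda \;=\; h_\lambda \;+\; \sum_{\mu\,\rhd\,\lambda} a_{\lambda,\mu}\, h_\mu,$$
where $\rhd$ denotes strict dominance.

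Next I would show that $\mu\,\rhd\,\lambda$ implies $\mu<\lambda$ in the paper's ordering, which reindexes the sum above as required. By the standard identity $\mu\,\unrhd\,\lambda \Longleftrightarrow \mu'\,\unlhd\,\lambda'$, we obtain $\mu'\,\lhd\,\lambda'$. Let $k$ be the smallest index at which the $k$-th partial sums of $\mu'$ and $\lambda'$ disagree; since the earlier entries of $\mu'$ and $\lambda'$ coincide, strict dominance at index $k$ then forces $\mu'_k<\lambda'_k$ together with $\mu'_i=\lambda'_i$ for $i<k$. This is exactly the condition $\mu<\lambda$ in the paper's ordering, establishing the first equality.

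For the second equality, I would apply the involution $\omega$ on $\Lambda$ characterized by $\omega(h_n)=e_n$ (equivalently $\omega(s_\lambda)=s_{\lambda'}$) to the first identity written for $\lambda'$ in place of $\lambda$. This yields $s_\lambda = \omega(s_{\lambda'}) = e_{\lambda'} + \sum_{\mu<\lambda'} a_{\lambda',\mu}\, e_\mu$, giving the claim with $b_{\lambda,\mu}:=a_{\lambda',\mu}$. There is no real obstacle: the whole argument is a combination of the unitriangularity of Kostka numbers, the standard behaviour of conjugation under dominance, and the $\omega$ involution, so only the comparison of the two orderings needs to be checked, and that is immediate from partial sums of duals.
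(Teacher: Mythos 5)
Your proof is correct and is exactly the standard argument the paper is implicitly invoking when it states this lemma without proof as ``well known'': unitriangularity of the Kostka matrix with respect to dominance, the fact that conjugation reverses dominance and that strict dominance of conjugates implies the paper's lexicographic-on-conjugates order $<$, and the involution $\omega$ with $\omega(s_{\lambda'})=s_{\lambda}$, $\omega(h_{\mu})=e_{\mu}$. The only point worth noting is that your sum is indexed by the (possibly smaller) set $\{\mu : \mu \rhd \lambda\}$ rather than all $\mu<\lambda$, which is harmless since the remaining coefficients may be taken to be zero.
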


\begin{lemma} \label{lem:prodw} For any symmetric functions $f$ and $g$ 
we have, $w(fg)=w(f) \cup w(g).$
\end{lemma}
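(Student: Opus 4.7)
The plan is to reduce to the monomial case $f=s_\lambda$, $g=s_\mu$, prove $w(s_\lambda s_\mu)=\lambda\cup\mu$ by working in the $h$-basis, and then bootstrap to general $f,g$ via a strict monotonicity property of the operation $\cup$ in the ordering $>$.

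First I would establish the crucial monotonicity lemma: if $\sigma\le\lambda$ and $\tau\le\mu$ with $(\sigma,\tau)\neq(\lambda,\mu)$, then $\sigma\cup\tau<\lambda\cup\mu$. This comes from the identity $(\alpha\cup\beta)'=\alpha'+\beta'$. Translating the ordering into lex-comparison of conjugates, I would argue by induction on coordinates: if $(\sigma'+\tau')_i=(\lambda'+\mu')_i$ for all $i<k$, then the individual pointwise inequalities $\sigma'_i\le\lambda'_i$ and $\tau'_i\le\mu'_i$ (which follow inductively from lex-comparison together with prior equality of coordinates) must be equalities to sum to equality; strict inequality at the first $k$ where the sums differ then propagates to $(\sigma'+\tau')_k<(\lambda'+\mu')_k$.

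Next, for the monomial case, I would apply Lemma~\ref{lem:rt} to write $s_\lambda=h_\lambda+\sum_{\sigma<\lambda}a_{\lambda,\sigma}h_\sigma$ and similarly for $s_\mu$. Multiplying and using $h_\alpha h_\beta=h_{\alpha\cup\beta}$ together with the monotonicity lemma gives
$$s_\lambda s_\mu=h_{\lambda\cup\mu}+\sum_{\gamma<\lambda\cup\mu}c_\gamma h_\gamma.$$
Because the transition in Lemma~\ref{lem:rt} is unitriangular with respect to the total order $>$, its inverse is as well, so $h_\nu=s_\nu+(\text{l.o.t.\ in the $s$-basis})$. Re-expanding in Schur functions therefore yields $s_\lambda s_\mu=s_{\lambda\cup\mu}+\sum_{\rho<\lambda\cup\mu}d_\rho s_\rho$, and in particular $w(s_\lambda s_\mu)=\lambda\cup\mu$.

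Finally, for the general statement, write $f=\sum_\lambda a_\lambda s_\lambda$ and $g=\sum_\mu b_\mu s_\mu$ with $\lambda_0=w(f)$, $\mu_0=w(g)$, and expand $fg=\sum a_\lambda b_\mu\, s_\lambda s_\mu$. By the monomial case each summand is supported in $\{s_\rho:\rho\le\lambda\cup\mu\}$; by strict monotonicity of $\cup$, $\lambda\cup\mu\le\lambda_0\cup\mu_0$ for every pair with $a_\lambda b_\mu\neq 0$, with equality only for $(\lambda,\mu)=(\lambda_0,\mu_0)$. Hence the coefficient of $s_{\lambda_0\cup\mu_0}$ in $fg$ equals exactly $a_{\lambda_0}b_{\mu_0}\neq 0$ and no strictly larger $s_\nu$ appears, giving $w(fg)=\lambda_0\cup\mu_0=w(f)\cup w(g)$. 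The only substantive step is the strict monotonicity of $\cup$, since the ordering is lex on conjugates rather than a manifestly additive order; once that is in hand the remainder is formal bookkeeping in the $h$-basis.
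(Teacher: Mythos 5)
Your proof is correct and follows essentially the same route as the paper's one-sentence argument: expand in the $h$-basis via Lemma~\ref{lem:rt}, use $h_{\mu}h_{\nu}=h_{\mu\cup\nu}$, and conclude by unitriangularity of the transition to Schur functions. The only difference is that you spell out the strict monotonicity of $\cup$ with respect to the lexicographic order on conjugates (via $(\alpha\cup\beta)'=\alpha'+\beta'$), a step the paper leaves implicit, and your verification of it is sound.
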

\begin{proof} 
Since $h_{\mu} h_{\nu}=h_{\mu \cup \nu}$, it follows from Lemma~\ref{lem:rt} that there are integers 
$c_{\lambda}$ such that $fg=c_{w(f) \cup w(g)}h_{w(f) \cup w(g)}+\sum_{\lambda < w(f) \cup w(g)} c_{\lambda} h_{\lambda}$
with $c_{w(f) \cup w(g)}\neq 0$.
\end{proof}

Due to the lack of a suitable reference we will show here how $w$ behaves with respect to plethysm between symmetric functions. 

\begin{lemma}\cite[p.~158]{Mac}\label{lem:plet:prod} 
For any symmetric functions $f$, $g$ and $h$ we have, 
\begin{equation*} 
(f \circ h) (g \circ h)=(f \, g) \circ h.
\end{equation*}
\end{lemma}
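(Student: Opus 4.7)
The plan is to recognize the identity as the defining multiplicativity of plethysm on the power-sum algebra $\Lambda \otimes \mathbb{Q} = \mathbb{Q}[p_1, p_2, \ldots]$. As constructed in \cite[Appendix A]{Mac}, for each fixed symmetric function $h$, the operation $f \mapsto f \circ h$ is defined by first prescribing $p_n \circ h$ (the symmetric function obtained from $h$ by replacing each variable $x_i$ with $x_i^n$) and then extending to all of $\Lambda \otimes \mathbb{Q}$ as the unique $\mathbb{Q}$-algebra homomorphism with these values on the algebra generators $p_n$. The identity $(fg)\circ h = (f\circ h)(g \circ h)$ is then nothing but the assertion that this extension is multiplicative.

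Concretely, the argument would proceed in two steps. First, I would check the identity when $f = p_\lambda$ and $g = p_\mu$ are products of power sums: here $fg = p_{\lambda \cup \mu}$ and both sides equal $\prod_i (p_{\lambda_i} \circ h) \cdot \prod_j (p_{\mu_j} \circ h)$, directly from the defining multiplicativity of plethysm on monomials in the $p_n$. Second, I would reduce the general case to this one by writing arbitrary $f$ and $g$ as $\mathbb{Q}$-linear combinations in the $p_\lambda$-basis and invoking the linearity of $f \mapsto f \circ h$ in its first slot.

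Since the lemma is really the ring-homomorphism property of plethysm in its first argument, there is no substantive obstacle. The only point worth emphasizing is that one must work with the definition of plethysm from \cite[Appendix A]{Mac}, which is engineered precisely so that $f \mapsto f \circ h$ is a $\mathbb{Q}$-algebra endomorphism of $\Lambda \otimes \mathbb{Q}$; with that set-up in hand, the identity is immediate.
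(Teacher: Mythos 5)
Your argument is correct: the paper offers no proof of this lemma, simply citing \cite[p.~158]{Mac}, and your proof is precisely the standard one underlying that citation, namely that $f \mapsto f \circ h$ is by construction the unique $\mathbb{Q}$-algebra homomorphism of $\Lambda \otimes \mathbb{Q} = \mathbb{Q}[p_1, p_2, \ldots]$ sending $p_n$ to $h(x_1^n, x_2^n, \ldots)$, so multiplicativity in the first argument is immediate. No gaps; the reduction to the $p_\lambda$-basis by linearity is exactly right.
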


We denote by $(\cdotp | \cdotp )$ the standard inner product on $\Lambda$ for which Schur functions are orthonormal. 

\begin{proposition}\cite[Theorem I, IA]{Newell} Say that $s_{(k)} \circ s_{(m)}=\sum_{\lambda} a_{\lambda} s_{\lambda}$, then, for any $0 \leq i \leq k$, 
  \begin{equation}
    \label{eq:newell1}
    (s_{(1^i)} \circ s_{(m-1)}) \, (s_{(k-i)} \circ s_{(m)}) = \sum_{\lambda} \sum_{\nu} a_{\lambda} \bigl(s_{(1^i)}s_{\nu} | s_{\lambda}\bigr) s_{\nu}.
  \end{equation}
Similarly, say that $s_{(1^k)} \circ s_{(m)}=\sum_{\lambda} b_{\lambda} s_{\lambda}$, then, for any $0 \leq i \leq k$, 
\begin{equation}
  \label{eq:newell2}
  (s_{(i)} \circ s_{(m-1)}) \, (s_{(1^{k-i})} \circ s_{(m)}) = \sum_{\lambda} \sum_{\nu} b_{\lambda} \bigl(s_{(1^i)}s_{\nu} | s_{\lambda}\bigr) s_{\nu}.
\end{equation} 
\end{proposition}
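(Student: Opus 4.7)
The plan is to interpret both identities representation-theoretically and to prove them via a single Mackey decomposition, using the fact (see Appendix~\ref{sec:character}) that plethysms with $s_{(m)}$ arise as characters of induced representations from wreath products.

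First I would reformulate \eqref{eq:newell1}. Taking the inner product with $s_\nu$ on both sides and using the self-adjointness $(fg \mid h) = (g \mid f^\perp h)$ of multiplication shows that \eqref{eq:newell1} is equivalent to the skewing identity
$$s_{(1^i)}^\perp \bigl(s_{(k)} \circ s_{(m)}\bigr) = \bigl(s_{(1^i)} \circ s_{(m-1)}\bigr)\,\bigl(s_{(k-i)} \circ s_{(m)}\bigr)$$
in $\Lambda$. Since $s_{(k)} \circ s_{(m)}$ is the character of $\mathrm{Ind}^{\Sg_{km}}_{\Sg_k \sim \Sg_m}(\mathrm{triv})$ and $s_{(1^i)}^\perp$ extracts the $V_{(1^i)}$-isotypic component of the restriction to $\Sg_i \times \Sg_{km-i}$, the problem becomes the computation of this sign-isotypic component as an $\Sg_{km-i}$-representation.

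Next I would apply Mackey's formula. The double cosets $(\Sg_i \times \Sg_{km-i}) \backslash \Sg_{km} / (\Sg_k \sim \Sg_m)$ are parameterized by partitions $\mu \vdash i$ with $l(\mu) \leq k$ and $\mu_1 \leq m$, where $\mu$ records the multiset of intersection sizes $|S \cap B_s|$ between the marked subset $S$ and the blocks $B_1, \ldots, B_k$. The key vanishing is: whenever $\mu$ has a part of size $\geq 2$, the stabilizer $H \cap g(\Sg_k \sim \Sg_m) g^{-1}$ contains a transposition of two marked points within a single block, which acts as $-1$ on $V_{(1^i)}$ but trivially on the remaining points, so the Mackey summand contributes nothing to the sign isotypic. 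Only the double coset $\mu = (1^i)$ survives. Its stabilizer is $H' \cong A \times B$ with $A = \Sg_i \sim \Sg_{m-1}$ and $B = \Sg_{k-i} \sim \Sg_m$, embedded \emph{diagonally} in $\Sg_i \times \Sg_{km-i}$ because the outer $\Sg_i$ of $A$ simultaneously permutes the $i$ marked points and the $i$ groups of remaining $m-1$ points in the chosen blocks.

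By Frobenius reciprocity, extracting the $V_{(1^i)}$-isotypic and unwinding this diagonal embedding causes the two occurrences of the sign character of outer $\Sg_i$ (one from $V_{(1^i)}$, one from the embedding) to cancel; the surviving $\Sg_{km-i}$-representation is then identified, using the plethysm formula in Appendix~\ref{sec:character}, as the one with character $(s_{(1^i)} \circ s_{(m-1)})\,(s_{(k-i)} \circ s_{(m)})$, matching the LHS. The proof of \eqref{eq:newell2} is entirely analogous with $\mathrm{Ind}(\mathrm{triv})$ replaced by $\mathrm{Ind}(V_{(1^k)} \boxtimes \mathrm{triv}^{\boxtimes k})$: the vanishing for $\mu$ with a part $\geq 2$ still holds because such transpositions lie in the inner $\Sg_m$, on which the outer sign is trivial; and for $\mu = (1^i)$ the additional sign from $V_{(1^k)}$ restricted to outer $\Sg_i \leq \Sg_k$ now cancels the $V_{(1^i)}$-sign in the $A$-factor (yielding $s_{(i)} \circ s_{(m-1)}$) while remaining as $V_{(1^{k-i})}$ on the $B$-factor (yielding $s_{(1^{k-i})} \circ s_{(m)}$). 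The hardest part is the Mackey bookkeeping: precisely pinning down the diagonal embedding $A \times B \hookrightarrow \Sg_i \times \Sg_{km-i}$ and tracking the several sign characters consistently through the two applications of Frobenius reciprocity.
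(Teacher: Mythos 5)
The paper offers no proof of this proposition: it is quoted verbatim from Newell's 1951 paper, whose own argument proceeds by direct manipulation of $S$-functions and recurrences for plethysm coefficients. Your Mackey-theoretic argument is therefore a genuinely different, self-contained route, and its skeleton is sound. Pairing the right-hand side of \eqref{eq:newell1} with $s_{\nu}$ shows it equals $s_{(1^i)}^{\perp}\bigl(s_{(k)}\circ s_{(m)}\bigr)$, i.e.\ the multiplicity space of $\mathrm{sgn}_{\Sg_i}$ in $\mathrm{Res}^{\Sg_{km}}_{\Sg_i\times\Sg_{km-i}}\mathrm{Ind}^{\Sg_{km}}_{\Sg_k\sim\Sg_m}\mathbf{1}$; the double cosets are indeed indexed by partitions $\mu\vdash i$ with $l(\mu)\leq k$, $\mu_1\leq m$ recording intersection sizes with the blocks; and your vanishing argument is correct: if some part of $\mu$ is at least $2$, the stabilizer contains a transposition of two marked points inside one block, which acts by $-1$ on $\mathrm{sgn}_{\Sg_i}\boxtimes U$ but is fixed by the inducing character (trivial in the first case, inner hence outer-sign-trivial in the second), killing that summand. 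The identification of the surviving stabilizer as $\bigl(\Sg_i\sim(\Sg_1\times\Sg_{m-1})\bigr)\times(\Sg_{k-i}\sim\Sg_m)$, diagonally embedded via the outer $\Sg_i$, is also right. What your proof buys over the citation is that the statement becomes transparent in exactly the wreath-product language the paper already sets up in Appendix~\ref{sec:character}; what the citation buys is brevity.

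One point needs repair: your sign bookkeeping for \eqref{eq:newell1} is internally inconsistent. You say ``the two occurrences of the sign character of outer $\Sg_i$ \ldots cancel,'' but in the first identity there is only \emph{one} sign present (the one from $V_{(1^i)}$, since the inducing character is trivial), and it does not cancel: via Frobenius reciprocity and the diagonal embedding it \emph{transfers} to the outer $\Sg_i$ of the factor $\Sg_i\sim\Sg_{m-1}$, and this is precisely what produces $s_{(1^i)}\circ s_{(m-1)}$ rather than $s_{(i)}\circ s_{(m-1)}$. If the cancellation you describe actually occurred, you would obtain $(s_{(i)}\circ s_{(m-1)})(s_{(k-i)}\circ s_{(m)})$, which is false already for $k=i=m=2$: there $s_{(1,1)}^{\perp}(s_{(4)}+s_{(2,2)})=s_{(1,1)}$, not $s_{(2)}$. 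Your description of the cancellation in \eqref{eq:newell2} is the correct one (the restricted outer sign of $V_{(1^k)}$ meets the $V_{(1^i)}$-sign and they cancel on the $A$-factor while $V_{(1^{k-i})}$ survives on the $B$-factor); only the narrative for the first case needs to be rewritten accordingly. With that correction the argument goes through.
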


\begin{proposition} \label{prop:plet}
For any $m \geq 1$ 
and partition $\mu$ of $k$ we have
\begin{equation} \label{eq:plet}
w(s_{\mu} \circ s_{(m)})=\begin{cases} {((m-1)}^k) + \mu \quad \text{if $m$ odd} \\ ({(m-1)}^k) +  \mu'  \quad \text{if $m$ even.}\end{cases}
\end{equation}
\end{proposition}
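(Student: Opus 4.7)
The plan is to reduce first to the one-row/one-column cases $\mu=(k),(1^k)$ via Lemmas~\ref{lem:rt},~\ref{lem:prodw}, and~\ref{lem:plet:prod}, and then handle those by induction on $k$ using Newell's identities~\eqref{eq:newell1} and~\eqref{eq:newell2}, with $k\le2$ as base.

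For the reduction, when $m$ is odd Lemma~\ref{lem:rt} writes $s_\mu=h_\mu+\sum_{\nu<\mu}a_{\mu,\nu}h_\nu$. Using $h_\sigma=\prod_i s_{(\sigma_i)}$ with Lemma~\ref{lem:plet:prod} and then Lemma~\ref{lem:prodw} one obtains
\[
w(h_\sigma\circ s_{(m)})=\bigcup_i w(s_{(\sigma_i)}\circ s_{(m)})=((m-1)^k)+\sigma
\]
for every $\sigma\vdash k$, granting the one-row base $w(s_{(j)}\circ s_{(m)})=(j+m-1,(m-1)^{j-1})$. The map $\sigma\mapsto((m-1)^k)+\sigma$ is strictly order-preserving in the $w$-order (the duals share the prefix $(k^{m-1})$ and then attach $\sigma'$), so each $\nu<\mu$ yields $w(h_\nu\circ s_{(m)})<w(h_\mu\circ s_{(m)})$. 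Schur positivity of plethysms then gives $w(s_\mu\circ s_{(m)})=((m-1)^k)+\mu$. For $m$ even, I run the same argument with $s_\mu=e_{\mu'}+\sum_{\nu<\mu'}b_{\mu,\nu}e_\nu$ and the one-column base case, yielding $((m-1)^k)+\mu'$.

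I induct on $k\ge1$ to show $w(s_{(k)}\circ s_{(m)})=\lambda^*:=(k+m-1,(m-1)^{k-1})$ for $m$ odd and $w(s_{(1^k)}\circ s_{(m)})=\lambda^*$ for $m$ even. The case $k=1$ is $s_{(m)}$; $k=2$ follows from the classical Littlewood formulas $s_{(2)}\circ s_{(m)}=\sum_{j=0}^{\lfloor m/2\rfloor}s_{(2m-2j,2j)}$ and $s_{(1,1)}\circ s_{(m)}=\sum_{j=0}^{\lfloor(m-1)/2\rfloor}s_{(2m-2j-1,2j+1)}$. For $k\ge3$ with $m$ odd I apply~\eqref{eq:newell1} with $i=1$, namely
\[
s_{(m-1)}\cdot\bigl(s_{(k-1)}\circ s_{(m)}\bigr)=\sum_\lambda a_\lambda\sum_{\nu:\lambda/\nu=\square}s_\nu,\qquad s_{(k)}\circ s_{(m)}=\sum_\lambda a_\lambda s_\lambda.
\]
By the inductive hypothesis and Lemma~\ref{lem:prodw} the left side has $w$-maximum $\nu:=(k+m-2,(m-1)^{k-1})$, so on the right $c_{\nu'}:=\sum_{\lambda:\lambda/\nu'=\square}a_\lambda$ vanishes for every $\nu'>\nu$. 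A key auxiliary bound is $l(\lambda)\le k$ for every $\lambda$ with $a_\lambda>0$; this follows from $s_{(k)}\circ s_{(m)}\preceq h_m^k=\sum_\lambda K_{\lambda,(m^k)}s_\lambda$ and the vanishing $K_{\lambda,(m^k)}=0$ when $l(\lambda)>k$.

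The main step, which I expect to be the most delicate, is a bad-partition analysis. Call $\lambda$ \emph{bad} if $\lambda>\lambda^*$ in the $w$-order and $l(\lambda)\le k$; every bad $\lambda$ has the form $((m-1)^k)+\tilde\mu$ with $\tilde\mu\vdash k$ and $l(\tilde\mu)\ge2$. The check is that every bad $\lambda$ admits a removable box whose removal produces $\nu'>\nu$: if $\tilde\mu_1>\tilde\mu_2$ remove the last box of row $1$; otherwise let $l$ be the largest index with $\tilde\mu_l=\tilde\mu_1$ and remove the last box of row $l$. The subcase $l=k$ forces $\tilde\mu=(1^k)$ and $\lambda=(m^k)$, giving $\nu'=(m^{k-1},m-1)$ with dual $(k^{m-1},k-1)$ that beats $(k^{m-1},1^{k-1})$ at position $m$ precisely when $k\ge3$ --- exactly why $k=2$ required the separate base. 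Hence $a_\lambda=0$ for every bad $\lambda$, giving $w(s_{(k)}\circ s_{(m)})\le\lambda^*$. For the matching lower bound, the same identity yields $c_\nu=a_{\lambda^*}+a_{\lambda^{(2)}}+a_{\lambda^{(3)}}>0$ where $\lambda^{(2)}=(k+m-2,m,(m-1)^{k-2})$ and $\lambda^{(3)}=(k+m-2,(m-1)^{k-1},1)$; $\lambda^{(3)}$ is killed by the length bound and $\lambda^{(2)}$ is bad, so $a_{\lambda^*}>0$. The column case for $m$ even runs identically using~\eqref{eq:newell2}.
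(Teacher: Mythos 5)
Your proof is correct, and its outer skeleton matches the paper's: the length bound $l(s_{\mu}\circ s_{(m)})\leq k$, the reduction to $\mu=(k)$ and $\mu=(1^k)$ via Lemmas~\ref{lem:rt}, \ref{lem:prodw} and~\ref{lem:plet:prod}, and Newell's identities for the one-row/one-column cases. But the heart of the argument is done by a genuinely different route. The paper inducts on $m$ (base case $m=1$), taking $i=k$ in \eqref{eq:newell1} and \eqref{eq:newell2}: since $s_{(0)}\circ s_{(m)}=1$, the identity expresses $s_{(1^k)}\circ s_{(m-1)}$ as the result of stripping vertical $k$-strips from the terms of $s_{(k)}\circ s_{(m)}$, and the length bound $l(\lambda)\leq k$ immediately forces $w(s_{(k)}\circ s_{(m)})=w(s_{(1^k)}\circ s_{(m-1)})+(1^k)$; the row--column alternation as $m$ decreases is precisely what produces the parity dependence in \eqref{eq:plet}. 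You instead induct on $k$ with $i=1$ (a Pieri-type, single-box identity), which costs you the explicit classification of the bad partitions $((m-1)^k)+\tilde{\mu}$ with $l(\tilde{\mu})\geq 2$, a choice of removable box for each, an appeal to Schur positivity of the plethysm to split the vanishing sum $c_{\nu'}=\sum a_{\lambda}=0$ into vanishing of the individual $a_{\lambda}$, and the extra base case $k=2$ via Littlewood's formulas (correctly identified as forced by the subcase $\lambda=(m^k)$, where the removable box only yields $\nu'>\nu$ when $k\geq 3$). I checked the bad-partition analysis, the identification $c_{\nu}=a_{\lambda^*}$ in the lower bound, and the reduction step, and all go through; your derivation of $l(s_{\mu}\circ s_{(m)})\leq k$ from $h_m^k=\sum_{\mu}f^{\mu}\,(s_{\mu}\circ s_{(m)})$ is also a nice self-contained replacement for the citation to Macdonald. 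One small remark: Schur positivity is invoked in your reduction step, where it is not actually needed (the terms $h_{\nu}\circ s_{(m)}$ with $\nu<\mu$ simply cannot reach the top partition, regardless of the signs of the $a_{\mu,\nu}$), whereas it is essential in your inductive step; it would be worth flagging it only where it does real work. Overall your route is longer and more delicate, but it pins down the leading term of each $s_{(k)}\circ s_{(m)}$ directly, while the paper's induction on $m$ is shorter and makes the parity phenomenon transparent.
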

\begin{proof} We first recall that $l(s_{\mu} \circ s_{(m)}) \leq |\mu|$, see \cite[Example 9, p. 140]{Mac}. Let us then continue by proving Equation~\eqref{eq:plet} for $\mu=(k)$ and $\mu=(1^k)$ by induction on $m$. The equation clearly holds for $m=1$. If $l(s_{\nu})=k$ and $l(s_{\lambda}) \leq k$ then $(s_{(1^k)}s_{\nu}|s_{\lambda}) \neq 0$ implies that $\lambda=\nu + (1^k)$. Moreover, if $l(s_{\eta})<k$, $l(s_{\lambda}) \leq k$ and $(s_{(1^k)}s_{\eta}|s_{\lambda}) \neq 0$ then $\lambda < \nu + (1^k)$ for any $\nu$ such that $l(\nu)=k$. Therefore, taking $i=k$ in Equation~\eqref{eq:newell1} and applying the fact that $l(s_{(k)} \circ s_{(m)}) \leq k$ we find by looking at the term with $\lambda=w(s_{(k)}\circ s_{(m)})$ on the right hand side of formula \eqref{eq:newell1}
that $w(s_{(k)} \circ s_{(m)})=w(s_{(1^k)} \circ s_{(m-1)}) + (1^k)$. Similarly, using Equation~\eqref{eq:newell2} we find by induction on $m$ that $w(s_{(1^k)} \circ s_{(m)})=w(s_{(k)} \circ s_{(m-1)}) + (1^k)$. This completes the induction. 

Finally, let us prove the statement for any $\mu$ and $m$. From Lemma~\ref{lem:plet:prod} and from the formula for $w(s_{(k)} \circ s_{(m)})$ it follows, for $m$ odd, that 
$$w(h_{\mu} \circ s_{(m)})= \bigcup_i w(s_{(\mu_i)} \circ s_{(m)})={((m-1)}^k) + \mu $$
and similarly from the formula for $w(s_{(1^k)} \circ s_{(m)})$, for $m$ even, that
$$w(e_{\mu} \circ s_{(m)})= \bigcup_i w(s_{(1^{\mu_i})} \circ s_{(m)})={((m-1)}^k) + \mu. $$
Using Lemma~\ref{lem:rt} it then follows, for $m$ odd, that 
$$w(s_{\mu} \circ s_{(m)})=w \bigl((h_{\mu}+\sum_{\nu < \mu} a_{\nu}h_{\nu}) \circ s_{(m)} \bigr)=w(h_{\mu} \circ s_{(m)})$$
and, for $m$ even, that 
$$w(s_{\mu} \circ s_{(m)})=w\bigl((e_{\mu'}+\sum_{\nu < \mu'} b_{\nu} e_{\nu}) \circ s_{(m)} \bigr)=w(e_{\mu'} \circ s_{(m)}).$$
\end{proof}


\end{document}